\def\Z{\mathbb Z}
\def\F{\mathbb F}
\def\cA{\mathcal A}
\def\cB{\mathcal B}
\def\cS{\mathcal S}
\def\cT{\mathcal T}
\def\fJ{\mathfrak J}
\def\fT{\mathfrak T}
\def\fX{\mathfrak X}
\def\splus#1{\stackrel{#1}+}
\def\1{{\bf 1}}
\theoremstyle{plain}
\newtheorem{theorem}{Theorem}
\newtheorem{proposition}{Proposition}
\newtheorem{lemma}{Lemma}
\theoremstyle{definition}
\theoremstyle{remark}
\begin{document}
\title{On the generalized restricted sumsets in abelian groups}
\author{Shanshan Du}
\email{ssdu@jit.edu.com}
\author{Hao Pan}
\email{haopan79@zoho.com}
\address{ The Fundamental Division, Jingling Institute of Technology,
Nanjing 211169, People's Republic of China}
\address{Department of Mathematics, Nanjing University,
Nanjing 210093, People's Republic of China}
\keywords{Restricted sumsets}
\subjclass[2010]{Primary 11P70; Secondary 11B13}
\begin{abstract}
Suppose that $A$, $B$ and $S$ are non-empty subsets of a finite abelian group $G$.
Then the generalized restricted sumset
$$
A\splus{S}B:=\{a+b:\,a\in A,\ b\in B,\ a-b\not\in S\}
$$
contains at least
$$
\min\{|A|+|B|-3|S|,p(G)\}
$$
elements, where $p(G)$ is the least prime factor of $|G|$.
Further, we also have
$$
|A\splus{S}B|\geq \min\{|A|+|B|-|S|-2,p(G)\},
$$
provided that both $|A|$ and $|B|$ are large with respect to $|S|$.
\end{abstract}
\maketitle

\section{Introduction}
\setcounter{lemma}{0}\setcounter{theorem}{0}\setcounter{corollary}{0}
\setcounter{equation}{0}

Suppose that $G$ is a finite abelian group and $A,B$ are two non-empty subsets of $G$. Define the sumset
$$
A+B:=\{a+b:\,a\in A,\ b\in B\}.
$$
For a positive integer $n$, let $\Z_n$ denote the cyclic group of order $n$. If $p$ is prime and $\emptyset\neq A,B\subseteq\Z_p$, then classical Cauchy-Davenport theorem (cf. \cite[Theorem 2.2]{Na96}) says that
\begin{equation}
|A+B|\geq\min\{|A|+|B|-1,p\}.
\end{equation}
For a finite abelian group $G$, let $p(G)$ denote the least prime factor of $|G|$. In view of the well-known Kneser theorem, we have the following extension of the Cauchy-Davenport theorem in abelian groups:
\begin{equation}
|A+B|\geq\min\{|A|+|B|-1,p(G)\},
\end{equation}
where $A,B$ are two non-empty subsets of $G$.

On the other hand, Erd\H os and Heilbronn \cite{Er63,EH64} considered the restricted sumset
$$
A\dotplus B:=\{a+b:\,a\in A,\ b\in B,\ a\neq b\}.
$$
They conjectured that if $p$ is prime and $A,B$ are two non-empty subsets $\Z_p$, then 
\begin{equation}
|A\dotplus A|\geq\min\{2|A|-3,p\}.
\end{equation}
With help of the exterior algebra, Dias da Silva and Hamidoune \cite{DH94} confirmed the conjecture of Erd\H os and Heilbronn.
Subsequently, 
using the polynomial method, Alon, Nathanson and Ruzsa \cite{ANR95,ANR96} gave a simple proof of the Erd\H os-Heilbronn conjecture.
In fact, they obtained that
\begin{equation}
|A\dotplus B|\geq\min\{|A|+|B|-2,p\},
\end{equation}
provided $|A|\neq |B|$.

In \cite{Ka03,Ka04}, K\'arolyi considered the Erd\H os-Heilbronn problem in abelian groups. He proved that
\begin{equation}\label{Karolyi}
|A\dotplus A|\geq\min\{2|A|-3,p(G)\},
\end{equation}
where $A$ is a non-empty subset of a finite abelian group $G$. In \cite{BW09}, Balister and Wheeler showed that 
for $\emptyset\neq A,B\subseteq G$,
\begin{equation}\label{B-W}
|A\dotplus B|\geq\min\{|A|+|B|-3,p(G)\}.
\end{equation}
They even proved that (\ref{B-W}) is still valid when $G$ is a finite group (not necessarily commutative).
The key ingredient of the proofs of K\'arolyi and Balister-Wheeler is an inductive step, i.e., to prove (\ref{Karolyi}) and (\ref{B-W}) under the hypothesis that they are valid for $H$ and $G/H$, where $H$ is a subgroup of $G$.

In this paper, we shall consider the generalized restricted sumset
$$
A\splus{S} B:=\{a+b:\,a\in A,\ b\in B,\ a-b\not\in S\},
$$
where $A,B,S$ are the non-empty subsets of $G$. When $G=\Z_p$ with $p$ is prime, Pan and Sun proved that
\begin{equation}\label{P-S}
|A\splus{S} B|\geq\min\{|A|+|B|-|S|-2,p\},
\end{equation}
provided that $|S|<p$. Notice that the restriction $|S|<p$ is necessary, since if $S=\Z_p$, then
$A\splus{S}B$ is always empty.

It is natural to find a generalization of (\ref{P-S}) for abelian groups. However, as we shall see later, the inductive step will become much more complicated
when $|S|$ is large. Here we can establish the following weak type extension of (\ref{P-S}) for abelian groups.
\begin{theorem}\label{MainT1}
Let $G$ be a finite abelian group. Suppose that $A$, $B$ and $S$ are non-empty subsets of $G$. Then
\begin{equation}\label{ABS3S}
|A\splus{S} B|\geq\min\{|A|+|B|-3|S|,p(G)\}.
\end{equation}
\end{theorem}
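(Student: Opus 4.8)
The plan is to follow the inductive strategy of Károlyi and Balister–Wheeler, reducing the statement to the prime-order case through a subgroup $H$ of $G$. The base case is $G=\Z_p$: here inequality (\ref{ABS3S}) follows immediately from the Pan–Sun bound (\ref{P-S}), since $|A|+|B|-3|S|\leq |A|+|B|-|S|-2$ whenever $|S|\geq 1$, so the weaker bound is automatic as long as $|S|<p$; and when $|S|\geq p$ the right-hand side of (\ref{ABS3S}) is $\min\{\cdot,p\}=p$ only if the other term is also $\geq p$, which must be checked, but in the regime $|S|\geq p$ the claimed lower bound $|A|+|B|-3|S|$ is typically negative and the inequality is vacuous. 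So the prime case is essentially handed to us.

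\medskip

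For the inductive step I would fix a proper nontrivial subgroup $H\leq G$ and pass to the quotient $\pi\colon G\to G/H$. The idea is to split the counting according to cosets of $H$. For each coset, the contribution of $A$, $B$ and the forbidden set $S$ restricted to that coset behaves like an instance of the problem inside $H$, while the pattern of which cosets are hit behaves like an instance inside $G/H$. First I would set up notation for the fibers $A_x=A\cap(x+H)$, $B_y=B\cap(y+H)$ and $S_z=S\cap(z+H)$, and observe that an element of $A\splus{S}B$ lying in a coset $w+H$ arises from pairs $(a,b)$ with $\pi(a)+\pi(b)=w$ and $a-b\notin S$. The key quantity is, for each target coset, the maximum fiberwise restricted sumset size we can guarantee; I would apply the inductive hypothesis for $H$ to the fibers and the inductive hypothesis for $G/H$ to the coset-level sumset, then combine the two bounds additively. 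The factor $3$ in front of $|S|$ is exactly what gives enough slack to absorb the loss incurred when distributing $S$ across cosets and when some fibers are too small to contribute.

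\medskip

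The main obstacle, as the authors themselves flag, is the bookkeeping of $S$ in the inductive step. Unlike the classical restricted sumset where the forbidden relation $a=b$ lives in a single coset pattern, here $a-b\notin S$ couples the fiber indices in $G/H$ with the residues inside $H$, and a single forbidden element of $S$ can obstruct pairs across many cosets. The delicate point is to choose, for each target coset $w$, a good pair of fibers $(x,y)$ with $x+y=w$ such that the within-fiber forbidden set $S_{x-y}$ is not too large, and simultaneously to ensure the coset-level analysis does not double-count the loss from $S$. I expect the argument to require separating cosets into those where the fibers are large (where the $H$-induction gives a strong bound) and those where they are small, and carefully allocating the budget $3|S|=|S|+2|S|$ so that one copy of $|S|$ pays for the fiber-level restriction and the remaining $2|S|$ pays for the coset-level restriction and the small-fiber defects. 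Making this allocation tight enough to land on $|A|+|B|-3|S|$, while keeping the two inductive invocations logically independent, is where the real work lies.
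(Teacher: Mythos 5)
Your skeleton matches the paper's: induct via a proper subgroup $H$, decompose $A,B,S$ into $H$-cosets, and combine a quotient-level estimate with fiber-level estimates. The base-case observation (Pan--Sun gives $|A|+|B|-|S|-2\geq|A|+|B|-3|S|$ for $|S|\geq 1$) is also correct. But the proposal stops exactly where the proof starts, and the one concrete mechanism you do describe --- ``apply the inductive hypothesis for $G/H$ to the coset-level sumset and combine the two bounds additively'' --- does not close the induction. If the quotient level only yields $m+n-3h$ nonempty cosets (writing $m,n,h$ for the numbers of cosets met by $A,B,S$) and each coset is charged its own fiber loss, the total deficit is far more than $3|S|$: you lose $1$ per unrestricted coset from Cauchy--Davenport inside $H$, plus $3|\cS_k|$ per restricted coset, and there is no budget left. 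The paper avoids this by choosing $H$ of index exactly $p(G)$, so that $G/H\cong\Z_{p(G)}$ and the \emph{sharp} Pan--Sun bound $|\bar A\splus{\bar S}\bar B|\geq m+n-h-2$ applies at the quotient level (Lemma \ref{ABSFpa}); only the fibers use the $3|\cS_k|$ induction hypothesis, and the accounting $3\sum_k|\cS_k|-r\leq 3|S|-h$ is what makes the books balance. Your plan, which uses the weak $3|S|$-type bound on both levels, would not.

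Two further ingredients are missing and are not mere bookkeeping. First, to make the $\tau$ extra cosets beyond $\bar a_1+\bar B$ actually contribute $\sum_{k\geq h+3}|\cA_k|$ rather than $\tau$ copies of the smallest fiber, the paper needs a system of distinct representatives with controlled first indices $\gamma_j\in\{2,\dots,h+2,j+h+2\}$, obtained via Hall's theorem (Lemma \ref{ABSpa2}); nothing in your allocation scheme produces this. Second, and most seriously, in the extremal configuration $\tau=0$, $m=n=h+1$, $|\cA_1|=\cdots=|\cA_m|$ (Case IV of the paper), every additive combination of the above bounds falls short by exactly $1$. Closing that gap requires K\'arolyi's inverse theorem on sets with $|A+B|=|A|+|B|-1$ (Lemma \ref{inverse}) and a structure analysis (arithmetic progressions, cosets of a subgroup of order $p(H)$) forcing either a strict gain or $|\cS_k|=1$ for all $k$. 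Your proposal contains no inverse-theorem input and no mechanism to recover this last unit, so as written the argument would prove at best $|A|+|B|-3|S|-1$ in that case.
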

In fact, essentially our proof of Theorem \ref{MainT1} doesn't depend on the fact that $G$ is abelian.
In Section 2, we shall also give a brief explanation how to extend (\ref{ABS3S}) to general finite groups.

Although Theorem \ref{MainT1} holds unconditionally, we can get the better lower bound of $|A\splus{S}B|$ under some additional assumptions.
For example, when $G=\Z_{p^\alpha}$, (\ref{ABS3S})  can be improved to \cite[Remark 1.3]{PS06}
\begin{equation}\label{LBAB2S1}|A\splus{S}B|\geq\min\{|A|+|B|-2|S|-1,p\}.\end{equation}
On the other hand, if $|A|,|B|$ are large with respect to $|S|$, we can show that (\ref{P-S}) is also valid for any finite abelian groups.
\begin{theorem}\label{MainT2}
Let $A$, $B$ and $S$ be non-empty subsets of an abelian group $G$. 
Suppose that
$$
\min\{|A|,|B|\}\geq 9|S|^2-5|S|-3.
$$
Then
\begin{equation}\label{ABS1S2}
|A\splus{S} B|\geq\min\{|A|+|B|-|S|-2,p(G)\}.
\end{equation}
\end{theorem}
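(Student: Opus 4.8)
The plan is to induct on $|G|$, realizing the inductive step through a subgroup $H$ together with the quotient $G/H$ in the spirit of the arguments of K\'arolyi and Balister--Wheeler, and to use the largeness hypothesis precisely to neutralize the extra difficulty caused by $S$. The base case is $|G|=p$ prime: then $G\cong\Z_p$, and the hypothesis $\min\{|A|,|B|\}\ge 9|S|^2-5|S|-3$ forces $|S|<p$, so (\ref{P-S}) of Pan and Sun yields (\ref{ABS1S2}) at once. For the inductive step assume $|G|$ is composite, put $p=p(G)$, and choose a subgroup $H$ with $|H|=p$ (so $H\cong\Z_p$ and $p(G/H)\ge p$). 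Write $\pi\colon G\to\bar G:=G/H$, $\bar S:=\pi(S)$, and for $\bar a\in\bar G$ set $A_{\bar a}=A\cap\pi^{-1}(\bar a)$ with $a_{\bar a}=|A_{\bar a}|$, and similarly for $B$ and $S$.

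The basic mechanism is that the difference condition is \emph{monotone} under $\pi$ in one direction: if $\bar a-\bar b\notin\bar S$ then every lift has $a-b\notin S$, so the whole ordinary sumset $A_{\bar a}+B_{\bar b}$, a subset of the coset $\bar a+\bar b$, lies in $A\splus{S}B$. By Cauchy--Davenport inside $H\cong\Z_p$ this fills the fibre over $\bar a+\bar b$ with at least $\min\{a_{\bar a}+b_{\bar b}-1,p\}$ elements; in particular, if one such fibre is completely filled then $|A\splus{S}B|\ge p$ and we are done, so we may assume every fibre is incomplete. Summing these contributions over the cosets of the quotient restricted sumset $\bar A\splus{\bar S}\bar B$ reduces the problem to a weighted count over $\bar G$, into which one feeds the induction hypothesis in $\bar G$ (to control how many cosets are available) together with Kneser's theorem (to recover the sharp linear constant rather than a constant times $|S|$). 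It is often cleaner to track the complementary set $X=\{c\in A+B:\ \text{every representation }c=a+b\text{ has }a-b\in S\}$, since $|A\splus{S}B|=|A+B|-|X|$ and, when $|G|$ is odd, each $c\in X$ satisfies $A\cap(c-B)\subseteq\tfrac12(c+S)$ and hence has at most $|S|$ representations; the task then becomes bounding $|X|$ against the Kneser surplus of $|A+B|$.

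The main obstacle is that the difference condition does \emph{not} descend cleanly. A coset $\bar c$ may lie in $\bar A+\bar B$ but not in $\bar A\splus{\bar S}\bar B$, i.e.\ every representation $\bar c=\bar a+\bar b$ has $\bar a-\bar b\in\bar S$, and yet $\bar c$ can still contribute to $A\splus{S}B$: inside such a coset only the sub-part coming from $S_{\bar a-\bar b}\subsetneq H$ is actually forbidden, so one must run the restricted-sumset count a \emph{second} time inside $H$. This is exactly where $\min\{|A|,|B|\}\ge 9|S|^2-5|S|-3$ is used. The threshold is quadratic because one needs each participating fibre to be large enough (of size at least about $3|S|$) for Theorem \ref{MainT1} inside $H\cong\Z_p$ to apply with only an $O(|S_{\bar a-\bar b}|)$ deficit; since the forbidden mass satisfies $\sum_{\bar s\in\bar S}|S_{\bar s}|=|S|$ and must be spread over up to $|\bar S|\le|S|$ difference-classes, guaranteeing all relevant fibres meet the $3|S|$ bar costs $\min\{|A|,|B|\}$ on the order of $|S|^2$. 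The payoff is that, with the fibres thus filled, the only genuine loss left is the one incurred in the quotient, where the induction hypothesis yields the sharp $|\bar S|+2\le|S|+2$ deficit; this is what upgrades the crude $3|S|$ loss of Theorem \ref{MainT1} to the desired $|S|+2$.

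The delicate bookkeeping that the full proof must carry out is reconciling two competing largeness demands: the induction in $\bar G$ wants $|\bar A|,|\bar B|$ large relative to $|\bar S|$, but $|\bar A|$ and $|\bar B|$ may be far smaller than $|A|$ and $|B|$ if $A,B$ concentrate in few cosets. The quadratic slack in $9|S|^2-5|S|-3$ is calibrated to survive this drop, either by splitting into the case where few cosets are occupied (so the fibres are forced to be very large and Cauchy--Davenport alone suffices) and the case where many are occupied (so the quotient hypothesis is met). Finally, the halving steps require $|G|$ odd, which is automatic once $p(G)$ is odd; when $p(G)=2$ the right-hand side of (\ref{ABS1S2}) is at most $2$ and the inequality is checked directly.
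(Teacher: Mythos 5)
Your framework (induction on $|G|$ through a prime-order subgroup $H$, Cauchy--Davenport in the unrestricted fibres, a second restricted-sumset count in the fibres where $\bar a-\bar b\in\bar S$) matches the paper's, and you correctly identify the central difficulty: $|\bar A|,|\bar B|$ can collapse even when $|A|,|B|$ are huge, so the induction hypothesis in $G/H$ is not directly available. But your proposed resolution does not close this gap, and it inverts the actual mechanism. The paper does \emph{not} use the sharp $|\bar S|+2$ bound in the quotient; it uses only Theorem \ref{MainT1} (the crude $3|\bar S|$ loss) in $G/H$, via a Hall-theorem matching lemma (Lemma \ref{ABSmn3hG}) that selects \emph{distinct} elements $\bar a_{\gamma_j}+\bar b_{\eta_j}$ outside $\bar a_1+\bar B$ so that fibre contributions can be summed without double counting. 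The sharp $|\cS|+2$ bound is applied \emph{inside the fibres}, which is legitimate because $H$ has prime order so (\ref{P-S}) applies there; the quadratic hypothesis then pays for the crude quotient loss by guaranteeing that the gain $(n-3h)|\cA_1|$ from many occupied cosets dominates. Your version, by contrast, wants the induction hypothesis in the quotient and Theorem \ref{MainT1} in the fibres, and your fallback for the concentrated case (``few cosets occupied, so Cauchy--Davenport alone suffices'') is false: in an occupied coset with $\bar a-\bar b\in\bar S$, Cauchy--Davenport says nothing, and this is precisely where the loss accumulates.

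The decisive missing idea is the choice of $H$. Taking $|H|=p(G)$, as you do, gives no lower bound on the number $n$ of occupied cosets, so the ``many cosets'' case cannot be forced. The paper instead writes $G=K_1\oplus K_2$ (possible exactly because $G$ is not a cyclic $p$-group --- which is why $\Z_{p^\alpha}$ needs the separate argument of Section 3 with the different threshold $6|S|^2-5$) and shows by pigeonhole that $\max\{|\fX_{A,K_1}|,|\fX_{A,K_2}|\}\geq\sqrt{|A|}$, then passes to a prime-order subgroup of the relevant $K_i$; this is what converts the hypothesis $\min\{|A|,|B|\}\geq 9|S|^2-5|S|-3$ into $n\geq\sqrt{9|S|^2-5|S|-3}\approx 3|S|$ and makes the whole bookkeeping work. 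Without this device, and without the matching lemmas that organize the disjoint union of fibre contributions, your sketch does not yield a proof. (Your alternative suggestion of bounding the exceptional set $X$ of fully-forbidden sums via the halving map and Kneser's surplus is not in the paper and would be a genuinely different argument, but as stated it is only a reformulation: the hard step of bounding $|X|$ is left entirely open.)
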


\section{Proof of Theorem \ref{MainT1}}
\setcounter{lemma}{0}\setcounter{theorem}{0}\setcounter{corollary}{0}
\setcounter{equation}{0}

\begin{lemma}\label{ABSFpa} Let $p\geq 3$ be a prime and $\alpha\geq 1$.
Suppose that $A,B,S$ are non-empty subsets of $\F_{p^{\alpha}}$ with $|S|<p$, where $\F_{p^{\alpha}}$
is the finite field with $p^\alpha$ elements. Then for any $\gamma\in\F_{p^{\alpha}}\setminus\{0,-1\}$, the cardinality of the restricted sumset
$$
\{a+b:\,a\in A,\ b\in B,\ a-\gamma b\not\in S\}
$$
is at least
$$
\min\{|A|+|B|-|S|-2,p\}.
$$
\end{lemma}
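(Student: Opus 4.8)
The plan is to prove the lemma by the polynomial method, in the spirit of Alon--Nathanson--Ruzsa. Write $C:=\{a+b:\ a\in A,\ b\in B,\ a-\gamma b\notin S\}$ and suppose, for contradiction, that $|C|=m<\mu$, where $\mu:=\min\{|A|+|B|-|S|-2,p\}$. Since replacing $A,B$ by subsets can only shrink $C$, I would first pass to subsets so as to assume $|A|+|B|=|S|+2+\mu$; thus $\mu=|A|+|B|-|S|-2\le p$. Then consider
\[
P(x,y)=\prod_{s\in S}(x-\gamma y-s)\cdot\prod_{c\in C}(x+y-c)\in\F_{p^{\alpha}}[x,y].
\]
For every $(a,b)\in A\times B$ either $a-\gamma b\in S$, killing the first product, or $a+b\in C$, killing the second; hence $P$ vanishes identically on $A\times B$, while $\deg P=|S|+m$.

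Next I would reduce matters to a single coefficient. By the Combinatorial Nullstellensatz, if the top-degree part $H(x,y)=(x-\gamma y)^{|S|}(x+y)^{m}$ has a nonzero coefficient at some monomial $x^iy^j$ with $i+j=|S|+m$, $i\le|A|-1$, $j\le|B|-1$, then $P$ cannot vanish on the whole grid $A\times B$, a contradiction. Dehomogenizing gives $[x^iy^j]H=[x^i]f$ with $f(x):=(x-\gamma)^{|S|}(x+1)^{m}$, and the admissible exponents $i$ form an interval of length $|A|+|B|-|S|-m-1\ge 2$ in the main range (the boundary cases, where the interval degenerates, being easier, since there the required coefficient is the constant or leading term of $f$, which is nonzero). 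So everything comes down to showing that $f$ has at least one nonzero coefficient among a block of $\ge 2$ consecutive degrees near $x^{|A|-1}$.

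The main obstacle is precisely this coefficient nonvanishing, and it is where the hypotheses $\gamma\notin\{0,-1\}$ and $|S|<p$ enter. I would exploit that $f$ has only the two roots $\gamma$ and $-1$, distinct because $\gamma\ne-1$, so that its coefficients $c_i=[x^i]f$ obey the three-term recurrence
\[
\gamma(i+1)c_{i+1}=(i-1-|S|-m)\,c_{i-1}+\bigl[(1-\gamma)i-|S|+m\gamma\bigr]c_i,
\]
obtained by comparing coefficients in $f'(x-\gamma)(x+1)=f\cdot\bigl[|S|(x+1)+m(x-\gamma)\bigr]$. Here $c_0=(-\gamma)^{|S|}\ne0$ and the leading coefficient $c_{\deg f}=1\ne0$ because $\gamma\ne0$. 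If $c_{i-1}=c_i=0$, then $\gamma(i+1)c_{i+1}=0$, so $c_{i+1}=0$ unless $p\mid i+1$; thus a pair of consecutive zero coefficients propagates upward and forces the leading coefficient to vanish, a contradiction, \emph{provided no intermediate index is divisible by $p$}. In particular, when $\deg f=|S|+m<p$ this shows outright that $f$ has no two consecutive vanishing coefficients, so the interval of length $\ge 2$ meets a nonzero coefficient; this settles the case $\deg f<p$. The genuinely delicate case is $\deg f\ge p$, that is, $|A|+|B|$ or $|S|$ large compared with $p$, where runs of zero coefficients can in principle occur at indices $\equiv-1$ or $\equiv|S|+m+1\pmod p$. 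Here I would analyze such a maximal run via both directions of the recurrence, showing that the congruences forced on its endpoints, together with $\gamma\ne0,-1$ and $|S|<p$, are incompatible with the run covering the whole admissible interval. This modular bookkeeping is the crux of the argument.
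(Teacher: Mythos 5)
The paper offers no proof of this lemma at all: it is quoted verbatim from \cite[Corollary 2]{PS02}. Your setup is the right one and is in the same spirit as that reference: the auxiliary polynomial $P$, the Combinatorial Nullstellensatz reduction to the non-vanishing of some coefficient $c_i=[x^i]f$ with $f(x)=(x-\gamma)^{|S|}(x+1)^{m}$ and $i$ ranging over a window of $\ge 2$ consecutive integers, the correct handling of the degenerate windows via $c_0=(-\gamma)^{|S|}\ne 0$ and $c_{\deg f}=1$, and the three-term recurrence showing that two consecutive vanishing coefficients propagate upward. Your treatment of the case $\deg f=|S|+m<p$ is complete and correct.

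The gap is the case $\deg f\ge p$, which genuinely occurs (already $|S|+m=p$ is compatible with all your normalizations once $\alpha\ge 2$) and which you explicitly leave as a plan; moreover the plan as stated would not work. Running your recurrence in both directions shows that any maximal run of $\ge 2$ consecutive zero coefficients must be exactly the interval $[\,|S|+m+1-p,\;p-1\,]$; but this interval \emph{can} contain the whole admissible window (this happens precisely when $|A|\le p$ and $|B|\le p$, which is not excluded), so ``the congruences forced on its endpoints are incompatible with the run covering the admissible interval'' is false as a statement. What must be shown instead is that this entire block of coefficients cannot vanish, i.e.\ that $f$ cannot equal $g_1(x)+x^{p}g_2(x)$ with $\deg g_1,\deg g_2\le |S|+m-p$. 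That needs a further idea, for instance: multiply by $(x-\gamma)^{p-|S|}(x+1)^{p-m}$ to get $(x^{p}-\gamma^{p})(x^{p}+1)$, compare coefficients in degrees $0,\dots,p$ to conclude $g_1(x)\,(x-\gamma)^{p-|S|}(x+1)^{p-m}=c\,x^{p}-\gamma^{p}=c(x-\beta)^{p}$, and then use unique factorization (both $x-\gamma$ and $x+1$ divide the left side since $1\le|S|,m\le p-1$) to force $\gamma=\beta=-1$, contradicting $\gamma\ne-1$. Since this step is the crux of the lemma, is absent from your write-up, and your sketch points in a direction that does not suffice, the proposal has a genuine gap.
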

\begin{proof}
This is \cite[Corollary 2]{PS02}.
\end{proof}
\begin{lemma}\label{ABSpa2} Let $p$ be a prime and $\alpha\geq 1$.
Suppose that $A=\{a_1,\ldots,a_m\}$, $B=\{b_1,\ldots,b_n\}$ and $S$ are non-empty subsets of $\F_{p^{\alpha}}$ with $|S|<p$.
Let $h=|S|$ and suppose that $m\geq h+3$. If $m+n-h-2\leq p$,then the set
$$
(A\splus{S}B)\setminus\{a_1+b_1,\ldots,a_1+b_n\}
$$
contains the distinct elements
$$
a_{i_1}+b_{j_1},a_{i_2}+b_{j_2},\ldots,a_{i_{m-h-2}}+b_{j_{m-h-2}}
$$
such that for each $1\leq k\leq m-h-2$
$$
i_k\in\{2,3,\ldots,h+2,k+h+2\}.
$$
\end{lemma}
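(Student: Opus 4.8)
The plan is to recast the assertion as the existence of a matching that saturates one side of an auxiliary bipartite graph, and then to verify Hall's condition by means of Lemma \ref{ABSFpa}. Write $F=\{a_1+b_1,\dots,a_1+b_n\}$ for the first row and $V=(A\splus{S}B)\setminus F$ for the admissible sum-values that avoid it. I would consider the bipartite graph with left vertex set $V$ and right vertex set $\{1,\dots,m-h-2\}$ (the ``slots''), joining a value $v\in V$ to a slot $k$ exactly when $v$ has a representation $v=a_i+b_j$ with $a_i-b_j\notin S$ and $i\in\{2,\dots,h+2,k+h+2\}$. Any matching saturating all the slots yields distinct elements $a_{i_1}+b_{j_1},\dots,a_{i_{m-h-2}}+b_{j_{m-h-2}}$ of $V$ with the prescribed index restriction, so it suffices to produce one such matching.

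By the marriage theorem, such a matching exists once I check that $|\Gamma(T)|\ge|T|$ for every set $T$ of slots, where $\Gamma(T)\subseteq V$ denotes the set of neighbours of $T$. Given $T$, the idea is to introduce the row sets
$$
R_T=\{a_2,\dots,a_{h+2}\}\cup\{a_{k+h+2}:k\in T\},\qquad R_T^+=R_T\cup\{a_1\}.
$$
Every element of $(R_T\splus{S}B)\setminus F$ then lies in $\Gamma(T)$: a representative through one of the core rows $a_2,\dots,a_{h+2}$ is adjacent to all slots, whereas a representative through $a_{k+h+2}$ is adjacent to the slot $k\in T$. Consequently $|\Gamma(T)|\ge|(R_T\splus{S}B)\setminus F|$.

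The heart of the matter is to bound $|(R_T\splus{S}B)\setminus F|$ below by $|T|$, and here the decisive move is to pass to $R_T^+$. Adjoining $a_1$ enlarges the restricted sumset only by values lying in $F$, so that
$$
(R_T^+\splus{S}B)\setminus F=(R_T\splus{S}B)\setminus F,
$$
while $|R_T^+|=h+2+|T|$ is now large enough to feed Lemma \ref{ABSFpa} with $\gamma=1$ (legitimate since $p\ge3$). Because $|R_T^+|+n-h-2\le m+n-h-2\le p$, that lemma gives $|R_T^+\splus{S}B|\ge|T|+n$, and discarding the at most $n$ elements of $F$ leaves $|(R_T^+\splus{S}B)\setminus F|\ge|T|$, exactly as needed.

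I expect this last step to be the main obstacle, since the naive estimate based on $R_T$ alone falls short by precisely one; it is the harmless inclusion of $a_1$—which contributes only forbidden first-row sums and hence no genuine loss—that absorbs this deficit and closes Hall's inequality. The prime $p=2$ excluded from Lemma \ref{ABSFpa} I would settle by hand: the hypotheses then force $h=1$, $n=1$ and $m=4$, so that a single admissible element remains to be produced, and this is immediate by inspection.
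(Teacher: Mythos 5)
Your proof is correct and is essentially the paper's own argument: the paper defines $X_k=(\{a_1,\ldots,a_{h+2},a_{k+h+2}\}\splus{S}B)\setminus\{a_1+b_1,\ldots,a_1+b_n\}$ and verifies Hall's condition for a system of distinct representatives by applying Lemma \ref{ABSFpa} to the union $\{a_1,\ldots,a_{h+2}\}\cup\bigcup_{k\in I}\{a_{k+h+2}\}$, which is exactly your $R_T^{+}$ (the ``harmless'' inclusion of $a_1$ that you single out is already built into the paper's $X_k$, and its sums are likewise discarded with the first row). The only substantive point you add is the explicit treatment of $p=2$, which the paper passes over even though Lemma \ref{ABSFpa} is stated only for $p\geq 3$; your reduction of that case to $h=1$, $n=1$, $m=4$ and direct inspection is correct.
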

\begin{proof} Let 
$$
X_k=(\{a_1,a_2,\ldots,a_{h+2},a_{k+h+2}\}\splus{S}B)\setminus\{a_1+b_1,\ldots,a_1+b_n\}
$$
for each $1\leq k\leq m-h-2$.
Clearly for any $\emptyset\neq k\subseteq\{1,2,\ldots,m-h-2\}$, using Lemma \ref{ABSFpa}, we have
\begin{align*}
\bigcup_{k\in I}X_k=&\bigg(\bigg(\{a_1,a_2,\ldots,a_{h+2}\}\cup\bigcup_{k\in I}\{a_{k+h+2}\}\bigg)\splus{S}B\bigg)\setminus\{a_1+b_1,\ldots,a_1+b_n\}\\
\leq&\big(h+2+|I|)+|B|-|S|-2\big)-n=|I|.
\end{align*}
With help of the well-known Hall theorem, we may choose distinct $x_1,x_2,\ldots,x_{m-h-2}$ such that $x_k\in X_k$ for each $k=1,\ldots,m-h-2$, i.e., $x_k=a_{i_k}+b_{j_k}$ with $i_k\in\{2,\ldots,h+2,k+h+2\}$.
\end{proof}

\begin{proof}[Proof of Theorem \ref{MainT1}]
Assume that our assertion is true for any proper subgroup of $G$.
Suppose that $|A|+|B|-3|S|>p(G)$. Then we may choose non-empty $A'\subseteq A$ and 
$B'\subseteq B$ such that $|A'|+|B'|-3|S|=p(G)$. Clearly
$|A\splus{S}B|\geq|A'\splus{S}B'|$.
So we may assume that $|A|+|B|-3|S|\leq p(G)$. On the other hand, trivially we have
$$
|A\splus{S}B|\geq\max\{|A|-|S|,\ |B|-|S|\}.
$$
So 
$
|A|+|B|-3|S|>|B|-|S|
$
will imply $|A|>2|S|$, i.e.,  $$|A\splus{S}B|\geq|A|-|S|\geq |S|+1.$$ Hence we always assume that $|S|<p(G)$.

Let $H$ be a proper subgroup of $G$ such that $[G:H]=p(G)$.
Write
$$
A=\bigcup_{i=1}^m(a_i+\cA_i),\qquad B=\bigcup_{i=1}^n(b_i+\cB_i),\qquad
S=\bigcup_{i=1}^h(s_i+\cS_i),
$$
where $\cA_i,\cB_i,\cS_i$ are non-empty subsets of $H$ and $a_i-a_j,b_i-b_j,s_i-s_j\not\in H$ for any distinct $i,j$.
Noting that
$$
|A\splus{S} B|=|(-B)\splus{(-S)}(-A)|,
$$
without loss of generality, we may assume that
$n\geq m$. Furthermore, assume that
$$
|\cA_1|\geq|\cA_2|\geq\cdots\geq|\cA_m|.
$$

For each $a\in G$, let $\bar{a}$ denote the coset $a+H$. Let
$$\bar{A}=\{\bar{a}_1,\ldots,\bar{a}_m\},\qquad \bar{B}=\{\bar{b}_1,\ldots,\bar{b}_n\},\qquad\bar{S}=\{\bar{s}_1,\ldots,\bar{s}_h\}.$$
In view of Lemma \ref{ABSpa},
$$
|\bar{A}\splus{\bar{S}}\bar{B}|\geq|\bar{A}|+|\bar{B}|-|\bar{S}|-2=m+n-h-2.
$$
Let $1\leq \mu_1<\mu_2<\cdots<\mu_r\leq n$ be all integers such that
$$
\bar{a}_1-\bar{b}_{\mu_1},\bar{a}_1-\bar{b}_{\mu_2},\ldots,\bar{a}_1-\bar{b}_{\mu_r}\in\bar{S}.
$$
Without loss of generality, we may assume that $$\bar{a}_1-\bar{b}_{\mu_k}=\bar{s}_{k}$$ for each $1\leq k\leq r$.
Then by the induction hypothesis,
$$
|(a_1+\cA_1)\splus{s_{k}+\cS_{k}}(b_{\mu_k}+\cB_{\mu_k})|=
|\cA_1\splus{\cS_{k}^*} \cB_{\mu_k}|\geq|\cA_1|+|\cB_{\mu_k}|-3|\cS_{k}|,
$$
where $\cS_{k}^*=(b_{\mu_k}-a_1)+s_{k}+\cS_k$.

Let
$$
\fT=(\bar{A}\splus{\bar{S}}\bar{B})\setminus\{\bar{a}_{1}+\bar{b}_{1},\ldots,\bar{a}_{1}+\bar{b}_{n}\}
$$
and let
$
\tau=|\fT|$.
It follows from Lemma \ref{ABSFpa} that
$$
\tau=|\fT|\geq |\bar{A}\splus{\bar{S}}\bar{B}|-n\geq m-h-2.
$$
We may assume that 
$$
\bar{a}_{\gamma_1}+\bar{b}_{\eta_1},\bar{a}_{\gamma_2}+\bar{b}_{\eta_2}\ldots,\bar{a}_{\gamma_{\tau}}+\bar{b}_{\eta_{\tau}}
$$
are distinct elements of $\fT$,
where $2\leq \gamma_1,\ldots,\gamma_{\tau}\leq m$, $1\leq \eta_1,\ldots,\eta_{\tau}\leq m$ and $$\bar{a}_{\gamma_1}-\bar{b}_{\eta_1},\bar{a}_{\gamma_2}-\bar{b}_{\eta_2}\ldots,\bar{a}_{\gamma_{\tau}}-\bar{b}_{\eta_{\tau}}\not\in\bar{S}.$$ Further, in view of Lemma \ref{ABSpa2}, if $m\geq h+3$, we may assume
$$
\gamma_j\in\{2,3,\ldots,h+2,j+h+2\}
$$
for each $1\leq j\leq m-h-2$. Clearly
\begin{align*}
&\bigg(\bigcup_{k=1}^r\big((a_1+\cA_1)\splus{s_{k}+\cS_{k}}(b_{\mu_k}+\cB_{\mu_k})\big)\bigg)\\
\cup&
\bigg(\bigcup_{\substack{1\leq \nu\leq n\\ \nu\not\in\{\mu_1,\ldots,\mu_r\}}}\big((a_1+\cA_1)+(b_{\nu}+\cB_{\mu})\big)\bigg)\\
\cup&\bigg(\bigcup_{j=1}^{\tau}\big((a_{\gamma_j}+\cA_{\gamma_j})+(b_{\eta_j}+\cB_{\eta_j})\big)\bigg)
\end{align*}
forms a disjoint union of a subset of $A\splus{S}B$.
Therefore
\begin{align}\label{ASBrt}
|A\splus{S}B|\geq&\sum_{k=1}^r|\cA_1\splus{\cS_{k}^*}\cB_{\mu_k}|+
\sum_{\substack{1\leq \nu\leq n\\\nu\not\in\{\mu_1,\ldots,\mu_r\}}}|\cA_1+\cB_\nu|+\sum_{j=1}^{\tau}|\cA_{\gamma_i}+\cB_{\eta_j}|\notag\\
\geq&\sum_{k=1}^r(|\cA_1|+|\cB_{\mu_k}|-3|\cS_{k}|)+
\sum_{\substack{1\leq \nu\leq n\\ \nu\not\in\{\mu_1,\ldots,\mu_r\}}}(|\cA_1|+|\cB_\nu|-1)
+\sum_{j=1}^{\tau}|\cA_{\gamma_j}|\notag\\
\geq&n|\cA_1|+|B|+\sum_{j=1}^{\tau}|\cA_{\gamma_j}|-3\sum_{k=1}^r|\cS_{k}|-(n-r).\end{align}

Evidently
$$
3\sum_{k=1}^r|\cS_{k}|-r\leq 3\sum_{k=1}^h|\cS_{k}|-h=3|S|-h.
$$
Let
$$
\Psi=n(|\cA_1|-1)+|B|+\sum_{j=1}^{\tau}|\cA_{\gamma_j}|.
$$
Clearly (\ref{ABS3S}) is true if we can show that
$$
\Psi\geq |A|+|B|-h.
$$
We need to consider the following four cases:\medskip

\noindent (I) $|\cA_1|=1$. \medskip

Note that now $|\cA_1\splus{\cS_{k}^*}\cB_{\mu_k}|$ is greater than or equal to $|\cB_{\mu_k}|-|\cS_{k}|$, rather than $1+|\cB_{\mu_k}|-3|\cS_{k}|$. In view of (\ref{ASBrt}),
\begin{align*}
|A\splus{S}B|\geq&\sum_{k=1}^r(|\cB_{\mu_k}|-|\cS_{k}|)+
\sum_{\substack{1\leq \nu\leq n\\\nu\not\in\{\mu_1,\ldots,\mu_r\}}}|\cB_\nu|
+\sum_{j=1}^{\tau}1\notag\\
\geq&|B|+(m-h-2)-\sum_{k=1}^r(|\cS_{k}|-1)\geq|A|+|B|-|S|-2.\end{align*}\medskip

\noindent (II) $|\cA_1|\geq 2$ and $n\geq h+4$. \medskip

Suppose that $m\geq h+3$. Recall that for each $j=1,\ldots,m-h-2$,
$\gamma_j\in\{2,h+2,j+h+2\}$, i.e., $\gamma_j\geq j+h+2$. So
$$
\sum_{j=1}^{\tau}|\cA_{\gamma_k}|\geq\sum_{j=1}^{m-h-2}|\cA_{\gamma_j}|\geq 
\sum_{k=h+3}^{m}|\cA_{k}|.
$$
It follows that
\begin{align}
\Psi=&n(|\cA_1|-1)+|B|+\sum_{j=1}^{\tau}|\cA_{\gamma_k}|\notag\\
\geq&(h+2)|\cA_1|-(h+2)+\sum_{k=h+3}^{m}|\cA_{k}|+|B|+(n-h-2)(|\cA_1|-1)\notag\\
\geq&|A|+|B|-h.
\end{align}
And if $m\leq h+2$, we also have
\begin{align*}
\Psi\geq n(|\cA_1|-1)+|B|
\geq m|\cA_1|-m+|B|+(n-m)
\geq|A|+|B|-h.
\end{align*}\medskip

\noindent (III) $|\cA_1|\geq 2$, $n\leq h+3$ and $m\geq h+2$. \medskip

Suppose that
$$|\bar{A}\splus{\bar{S}}\bar{B}|\geq m+n-h-1,$$
i.e., $\tau\geq m-h-1$. Then
\begin{align}\label{PsiABtmh1}
\Psi\geq&m(|\cA_1|-1)+|B|+\sum_{j=1}^{\tau}|\cA_{\gamma_k}|\notag\\
\geq&((m-1)|\cA_1|+2)-m+|B|+(|\cA_{\gamma_1}|+(\tau-1))
\geq|A|+|B|-h.
\end{align}

So we may assume that $|\bar{A}\splus{\bar{S}}\bar{B}|=m+n-h-2$, i.e., $\tau\geq m-h-2$. 
Let
$$
\fJ=\{\bar{a}_j:\,|\cA_j|\leq |\cA_1|-1,\ 1\leq j\leq m\}.
$$
If $|\fJ|\geq 2$, then we get $m|\cA_1|\geq |A|+2$.
Thus
\begin{align*}
\Psi\geq m|\cA_1|+|B|+\tau-m \geq|A|+|B|-h.
\end{align*}
Therefore we may assume that $|\fJ|\leq 1$. 

Assume that there exist $1\leq j\leq m$ and $1\leq k\leq n$ such that 
\begin{equation}\label{ajbkAjA1}
\bar{a}_j+\bar{b}_k\not\in \bar{A}\splus{\bar{S}}\bar{B}\qquad\text{and}\qquad|\cA_j|=|\cA_1|.
\end{equation}
Then we can exchange $a_1+\cA_1$ and $a_j+\cA_j$, i.e., set $\bar{a}_j$ as the new $\bar{a}_1$. 
Since $\bar{a}_1+\bar{b}_k\not\in \bar{A}\splus{\bar{S}}\bar{B}$ now , we get
\begin{align*}
\tau=\big|(\bar{A}\splus{\bar{S}}\bar{B})\setminus\{\bar{a}_1+\bar{b}_1,\ldots,\bar{a}_1+\bar{b}_n\}\big|
\geq(m+n-h-2)-(n-1)=m-h-1.
\end{align*}
By (\ref{PsiABtmh1}), we also have
\begin{align*}
\Psi\geq m(|\cA_1|-1)+|B|+\sum_{j=1}^{\tau}|\cA_{\gamma_j}|\geq |A|+|B|-h.
\end{align*}

We still need to find $j,k$ satisfying the assumption (\ref{ajbkAjA1}).
Clearly we may assume that $|\bar{A}\splus{\bar{S}}\bar{B}|<p(G)$, otherwise we immediately get $|A\splus{S}B|\geq p(G)$.
Suppose that $m+n-1\leq p(G)$. Then
$$
|\bar{A}+\bar{B}|-|\bar{A}\splus{\bar{S}}\bar{B}|\geq (m+n-1)-(m+n-h-2)=h+1.
$$
Since $|\fJ|\leq 1$, we have $|\fJ+\bar{S}|\leq h$. So there exist $1\leq j\leq m$ and $1\leq k\leq n$ such that $\bar{a}_j+\bar{b}_k\not\in \bar{A}\splus{\bar{S}}\bar{B}$ and $\bar{a}_j\not\in\fJ$, i.e., $|\cA_j|=|\cA_1|$.

Suppose that $m+n-1>p(G)$, i.e., $(m-1)+n-1\geq p(G)$. Then $$(\bar{A}\setminus\fJ)+\bar{B}=G/H\supsetneq \bar{A}\splus{\bar{S}}\bar{B}.$$ We also can find
$\bar{a}_j+\bar{b}_k\not\in \bar{A}\splus{\bar{S}}\bar{B}$ with $\bar{a}_j\not\in\fJ$.\medskip

\noindent (IV) $|\cA_1|\geq 2$, $n\leq h+3$ and $m\leq h+1$. \medskip

Suppose that $\tau\geq 1$. From (\ref{PsiABtmh1}), we know that
\begin{align*}
\Psi\geq&\big((m-1)|\cA_1|+2\big)+|B|+\big((\tau-1)+\cA_{\gamma_1}\big)-m\\
\geq&|A|+|B|-h+1.
\end{align*}
And if there exists $1\leq j\leq m$ such that $|\cA_j|\leq |\cA_1|-1$, then we also have
\begin{align*}
\Psi\geq&m|\cA_1|+|B|-m\\
\geq&(|A|+1\big)+|B|-(h+1)
=|A|+|B|-h.
\end{align*}
Furthermore, we get
\begin{align*}
\Psi\geq&n(|\cA_1|-1)+|B|\geq m(|\cA_1|-1)+|B|+(n-m)\geq |A|+|B|-h
\end{align*}
provided that $n\leq h$ or $n>m$.

Below we assume that $$\tau=0\qquad,|\cA_1|=\cdots=|\cA_m|,\qquad n=m=h+1.$$
This is the most difficult part of the proof of Theorem \ref{MainT1}.
Clearly we may set $\bar{a}_1=\bar{a}$ for any $\bar{a}\in\bar{A}$.
Assume that $|\bar{A}\splus{\bar{S}}\bar{B}|<p(G)$.
Note that $\tau=0$ implies
$$
\bar{A}\splus{\bar{S}}\bar{B}\subseteq\{\bar{a}_1+\bar{b}_1,\ldots,\bar{a}_1+\bar{b}_m\},
$$
On the other hand, since $$
|\bar{A}\splus{\bar{S}}\bar{B}|<\min\{2m-1,p(G)\}\leq|\bar{A}+\bar{B}|,$$ 
we may choose $\bar{a}_1=\bar{a}\in\bar{A}$ and $\bar{b}_\mu\in\bar{B}$ such that 
$\bar{a}_1+\bar{b}_\mu\not\in\bar{A}\splus{\bar{S}}\bar{B}$.
So
$$
\bar{A}\splus{\bar{S}}\bar{B}\subseteq\{\bar{a}_1+\bar{b}_k:\,1\leq k\leq m,\ k\neq\mu\}.
$$
But
$$
|\bar{A}\splus{\bar{S}}\bar{B}|\geq 2m-h-2=h=m-1.
$$
We must have
$$
\bar{A}\splus{\bar{S}}\bar{B}=\{\bar{a}_1+\bar{b}_k:\,1\leq k\leq m,\ k\neq\mu\},
$$
i.e., $|\bar{A}\splus{\bar{S}}\bar{B}|=m-1$.

Arbitrarily choose $\bar{a}\in\bar{A}$ as $\bar{a}_1$. 
Assume that $$\bar{a}_1-\bar{b}_{\mu_k}=\bar{s}_{\lambda_k},\qquad 1\leq k\leq r$$ and 
$$\bar{a}_1-\bar{b}_{\nu}\not\in\bar{S},\qquad \nu\not\in\{\mu_1,\ldots,\mu_r\}.$$
Without loss of generality, assume that
$\bar{a}_1+\bar{b}_{\mu_1}\not\in\bar{A}\splus{\bar{S}}\bar{B}$, i.e., $\bar{A}\splus{\bar{S}}\bar{B}=\{\bar{a}_1+\bar{b}_k:\,k\neq\mu_1\}$. 
Letting $\cS_{\lambda_k}^*=(b_k-a_k)+s_{\lambda_k}+\cS_{\lambda_k}$, we get
\begin{align}\label{ASBrm}
|A\splus{S}B|\geq&\sum_{k=1}^r|\cA_1\splus{\cS_{\lambda_k}^*}\cB_{\mu_k}|+
\sum_{\substack{1\leq\nu\leq m\\ \nu\not\in\{\mu_1,\ldots,\mu_r\}}}|\cA_1+\cB_{\nu}|\notag\\
\geq&\sum_{k=1}^r(|\cA_1|+|\cB_{\mu_k}|-3|\cS_{\lambda_k}|)+
\sum_{\substack{1\leq\nu\leq m\\ \nu\not\in\{\mu_1,\ldots,\mu_r\}}}(|\cA_1|+|\cB_\nu|-1)\notag\\
=&m(|\cA_1|-1)+\sum_{k=1}^m|\cB_k|-\sum_{k=1}^r(3|\cS_{\lambda_k}|-1).
\end{align}
Hence
\begin{equation}\label{ASBdelta}
|A\splus{S}B|\geq |A|+|B|-m-3|S|+h+\delta=|A|+|B|-3|S|-1+\delta,
\end{equation}
where $\delta\in\{0,1\}$ and $\delta=1$ if none of the following conditions is true:

\medskip
\noindent (a) $r=h$;

\medskip
\noindent (b) $|\cA_1\splus{\cS_{\lambda_k}^*}\cB_{\mu_k}|=|\cA_1|+|\cB_{\mu_k}|-3|\cS_{\lambda_k}|$ for each $1\leq k\leq h$;

\medskip
\noindent (c) $|\cA_1+\cB_\nu|=|\cA_1|+|\cB_\nu|-1$ for the unique $\nu\not\in\{\mu_1,\ldots,\mu_h\}$.

\medskip
Suppose that there exists some $\bar{a}\in\bar{A}$  such that $\delta=1$ when $\bar{a}_1=\bar{a}$,  i.e., at least one of (a)-(c) fails. Evidently by (\ref{ASBdelta}), we immediately get (\ref{ABS3S}). 
Assume that for any $\bar{a}\in\bar{A}$, we always have $\delta=0$ if $\bar{a}_1=\bar{a}$, i.e., (a)-(c) all hold. 

For each $2\leq i\leq m$, it is impossible that $$\bar{a}_i-\bar{b}_\nu\not\in\bar{S},$$
where $1\leq\nu\leq m$ is the unique one satisfying $\bar{a}_1-\bar{b}_\nu\not\in\bar{S}$.
Otherwise, by (a),
we also can get $$\{\bar{b}_k:\,1\leq k\leq m,\ k\neq\nu\}=\bar{a}_i-\bar{S}.$$ It follows that $$\bar{a}_i-\bar{a}_1+\bar{S}=\bar{S},$$ i.e.,
$\bar{S}=G/H$. This contradicts our assumption $h<m\leq p(G)$.

Below we need to the following inverse theorem of Kar\'olyi \cite[Theorem 4]{Ka05}.
\begin{lemma}\label{inverse} 
Let $A$ and $B$ be non-empty subsets of a finite group $G$.
Suppose that $|A+B|=|A|+|B|-1\leq p(G)-1$. Then the following one holds:

\medskip\noindent(1) $|A|=1$ or $|B|=1$;

\medskip\noindent(2) $A$ and $B$ are two arithmetic progressions with the common difference, i.e.,
$A=\{a,a+q,\ldots,a+(k-1)q\}$ and $B=\{b,q+b,\ldots,(l-1)q+b\}$;

\medskip\noindent(3) $A\subseteq a+K$ and $B\subseteq K+b$, where $K$ is a subgroup of $G$ of order $p(G)$.
\end{lemma}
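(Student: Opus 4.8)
The plan is to let the period of the sumset organize the argument and then appeal to the Vosper--Kemperman classification of critical pairs. I may assume $|A|\geq 2$ and $|B|\geq 2$, since otherwise conclusion (1) holds, and after a translation I may assume $0\in A$ and $0\in B$. Let $H=\{g\in G:\ g+(A+B)=A+B\}$ be the stabilizer (period) of $A+B$. Since $A+B$ is a nonempty union of cosets of $H$, we have $|A+B|\geq|H|$; and if $H$ were nontrivial then $|H|\geq p(G)$, because every nontrivial subgroup of $G$ has order at least the least prime factor of $|G|$. This would give $|A+B|\geq p(G)>p(G)-1$, contradicting the hypothesis. Hence $H$ is trivial and $A+B$ is aperiodic.

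With aperiodicity established, the second step is to invoke the structure theory of critical pairs---pairs with $|A+B|=|A|+|B|-1$---which in abelian groups rests on Kneser's theorem and was brought to its definitive form by Kemperman (and, in the prime case, by Vosper). The decisive use of the hypothesis $|A+B|\leq p(G)-1$ is that it forbids $A+B$ from containing a full coset of any nontrivial subgroup, which collapses the recursive Kemperman decomposition to its elementary terminal cases. Those terminal cases are precisely the alternatives in the statement: either $A$ and $B$ are arithmetic progressions with a common difference (conclusion (2)), or $A$ and $B$ lie in single cosets $a+K$ and $K+b$ of a subgroup $K$ with $|K|=p(G)$ inside which $A+B$ nearly fills the coset (conclusion (3)). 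The normalization $0\in A\cap B$ then exhibits the common difference, respectively the subgroup $K$, explicitly.

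To keep the heart of the argument self-contained I would realize the classification through the Dyson $e$-transform. For $e\in G$ set $A(e)=A\cup(B+e)$ and $B(e)=B\cap(A-e)$; then $|A(e)|+|B(e)|=|A|+|B|$ and $A(e)+B(e)\subseteq A+B$, so the transform never increases $|A+B|$ while preserving $|A|+|B|$. Choosing $e$ to be a difference occurring in the sets and iterating, one either strictly shrinks $|A+B|$---which is impossible once the critical value is reached---or the transform stabilizes, and the rigidity of the stable configuration forces the linear structure of conclusion (2); the sole exception, in which every admissible difference generates a subgroup of order $p(G)$, is conclusion (3).

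I expect the main obstacle to be the second step, the classification of aperiodic critical pairs, which is the genuine content of K\'arolyi's theorem. The delicate point is to separate (2) from (3): one must show that if $A$ and $B$ are not concurrent arithmetic progressions, then the differences appearing in $A$ and in $B$ cannot generate a subgroup larger than one of order $p(G)$ without forcing either $|A+B|\geq|A|+|B|$ or $|A+B|\geq p(G)$. Tracking exactly where the bound $|A+B|\leq p(G)-1$ is consumed throughout the transform---and, if the fully noncommutative version is wanted, replacing Kneser's theorem by the corresponding Kemperman-type or isoperimetric structure theorem for finite groups---is the part that demands the most care.
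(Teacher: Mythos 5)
This lemma is not proved in the paper at all: it is quoted verbatim as K\'arolyi's inverse theorem, \cite[Theorem 4]{Ka05}, so the only ``proof'' the paper offers is a citation. Measured against that, your proposal is an attempt to reconstruct K\'arolyi's theorem itself, and as written it contains a genuine gap exactly where the content of the theorem lies. Your first step is fine: the stabilizer $H$ of $A+B$ satisfies $|A+B|\geq |H|$, and any nontrivial subgroup of $G$ has order at least $p(G)$, so $|A+B|\leq p(G)-1$ forces $A+B$ to be aperiodic. But everything after that is asserted rather than proved. The claim that the Dyson $e$-transform ``stabilizes'' and that ``the rigidity of the stable configuration forces the linear structure of conclusion (2)'' is precisely Vosper-type rigidity; even in $\Z_p$ this requires a delicate case analysis (singleton degeneration of $B(e)$, the boundary case $|A+B|=p-1$ handled by complementation), and in a general abelian group it requires the full Kemperman structure theorem, whose terminal cases include quasi-periodic decompositions that you would still have to rule out using $|A+B|\leq p(G)-1$. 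You acknowledge this yourself (``I expect the main obstacle to be the second step, \dots\ which is the genuine content of K\'arolyi's theorem''), which is an admission that the proof is not actually carried out.

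There is a second, structural problem: the lemma is stated for an arbitrary finite group $G$ (note the asymmetric conclusion $A\subseteq a+K$, $B\subseteq K+b$), and in that generality Kneser's theorem --- the foundation of your stabilizer/Kemperman framework --- is false. You defer the noncommutative case to ``the corresponding Kemperman-type or isoperimetric structure theorem,'' but that is not an optional refinement; it is the statement being proved. K\'arolyi's actual argument in \cite{Ka05} proceeds by induction through a normal subgroup, combining the Cauchy--Davenport-type bound in $G/H$ with an inverse statement in $H$, rather than by a direct transform argument in $G$. The honest conclusion is that your write-up is a correct road map to the known proof, but it does not constitute a proof; for the purposes of this paper the right move is the one the authors make, namely to cite \cite[Theorem 4]{Ka05}.
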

Since $|\cA_1|+|\cB_\nu|>p(H)$ implies $|A\splus{S}B|\geq|\cA_1+\cB_\nu|\geq p(H)$, we may assume that
\begin{equation}\label{A1BmpH}
|\cA_1|+|\cB_\nu|\leq p(H).
\end{equation}
By (c), we get
\begin{equation}\label{A1Bnu1}
|\cA_1+\cB_{\nu}|=|\cA_1|+|\cB_{\nu}|-1.
\end{equation}
By Lemma \ref{inverse} and (\ref{A1Bnu1}), we must have $|\cB_{\nu}|=1$, or $\cA_1$ and  $\cB_\nu$ are the arithmetic progressions with the common difference, or $\cA_1=\alpha+K$ and  $\cB_\nu=\beta+K$ where $K$ is a subgroup of $H$ of order $p(H)$.

For each $2\leq i\leq m$, since $\bar{a}_i-\bar{b}_\nu\in\bar{S}$,
we may assume that $\bar{a}_i-\bar{b}_\nu=\bar{s}_{\kappa_i}
$ for some $1\leq \kappa_i\leq h$. And there exists a unique $1\leq \upsilon_i\leq m$ such that
$\bar{a}_i+\bar{b}_{\upsilon_i}\not\in \bar{A}\splus{\bar{S}}\bar{B}$.
Note that (a)-(c) are still valid even if we exchange $a_1+\cA_1$ and $a_i+\cA_i$. It follows from (b) and (c) that
\begin{equation}\label{AiBnuSki3}
|\cA_i\splus{\cS_{\kappa_i}^*}\cB_{\nu}|=|\cA_i|+|\cB_{\nu}|-3|\cS_{\kappa_i}^*|
\end{equation}
and
\begin{equation}\label{AiBupi1}
|\cA_i|+\cB_{\upsilon_i}|=|\cA_i|+|\cB_{\upsilon_i}|-1,
\end{equation}
where $\cS_{\kappa_i}^*=s_{\kappa_i}+b_m-a_2+\cS_{\kappa_i}$.

According to Lemma \ref{inverse}, there are four sub-cases:
\medskip

\noindent(i) $|\cB_{\nu}|=1$. \medskip

Trivially $$
|\cA_i\splus{\cS_{\kappa_i}^*}\cB_{\nu}|=|\cA_i|-|\cS_{\kappa_i}|>|\cA_2|+|\cB_{\nu}|-3|\cS_{\kappa_i}|,$$ which evidently contradicts (\ref{AiBnuSki3}).\medskip

\noindent(ii) There exist the subgroups $K_1,K_2$ of $H$ of order $p(H)$ and $\alpha,\beta\in H$ such that
$\cA_{i}\subseteq \alpha+K_1$ and $\cB_{\nu}\subseteq \beta+K_2$.\medskip

Since $|K_1|=|K_2|=p(H)$, either $K_1=K_2$ or $|K_1\cap K_2|=1$. If $|K_1\cap K_2|=1$, it is easy to see that
$$
|\cA_i+\cB_\nu|=|(\cA_i-\alpha)+(\cB_\nu-\beta)|=|\cA_i|\cdot|\cB_\nu|.
$$
Similarly, $|\cA_i-\cB_\nu|=|\cA_i|\cdot|\cB_\nu|$. So $$|\{(a,b):\,a-b\in\cS_{\kappa_i}^*,\ a\in\cA_i,\ b\in\cB_\nu\}|\leq |\cS_{\kappa_i}^*|.$$
Thus
\begin{align*}
|\cA_i\splus{\cS_{\kappa_i}^*}\cB_\nu|\geq&|\cA_i|\cdot|\cB_\nu|-|\cS_{\kappa_i}|\\
\geq&|\cA_i|+|\cB_\nu|-|\cS_{\kappa_i}|-1>
|\cA_i|+|\cB_\nu|-3|\cS_{\kappa_i}|.
\end{align*}

Suppose that $K_1=K_2$. Then letting $\cS_{\kappa_i}^\circ=(\beta-\alpha+\cS_{\kappa_i}^*)\cap K_1$ and applying Lemma \ref{ABSFpa} , we get
$$
|\cA_i\splus{\cS_{\kappa_i}^*}\cB_\nu|=|(\cA_2-\alpha)\splus{\cS_{\kappa_i}^\circ}(\cB_m-\beta)|\geq |\cA_i|+|\cB_\nu|-|\cS_{\kappa_i}^{\circ}|-2.
$$
From (\ref{AiBnuSki3}), we obtain that $|\cS_{\kappa_i}|=1.$\medskip

\noindent(iii) Either $\cA_{i}$ is an arithmetic progression and $\cB_{\nu}\subseteq \beta+K$ where $K$ is a subgroup of $H$, or 
$\cA_{i}=\alpha+K$ and $\cB_{\nu}$ is an arithmetic progression.\medskip

We only need to consider the first possibility, i.e., $\cA_i=\{a,a+q,\ldots,a+(d-1)q\}$ and $\cB_\nu=\beta+K$.
If $q\in K$, then $\cA_i\in a+K$.
By the discussion in  Case (ii), we can get that $|\cS_{\kappa_i}|=1$. 

Suppose that $q\not\in K$. Since $|\cA_i|=d<p(H)$ by (\ref{A1BmpH}), $a+K,\ldots,a+(d-1)q+K$ must be disjoint cosets of $K$. Otherwise, we can get $d\geq [\langle q,K\rangle:K]\geq p(H)$,
where $\langle q,K\rangle$ denotes the subgroup generated by $q$ and $K$. Write
$$
\cS_{\kappa_i}^*=\bigcup_{j=1}^{h^*}(t_j+\cT_j),
$$
where $\cT_j\subseteq K$ and $t_j-t_k\not\in K$ if $j\neq k$.
Assume that $a+\mu_k^*q-t_{k}\in K$ for $1\leq k\leq r^*$. Then
\begin{align*}
|\cA_i\splus{\cS_{\kappa_i}^*}\cB_\nu|\geq&\sum_{k=1}^{r^*}(|\cB_\nu|-|\cT_k|)+|B_\nu|\cdot(d-r^*)\\
=&d|\cB_\nu|-\sum_{k=1}^{r^*}|\cT_k|
\geq|\cA_i|+|\cB_\nu|-|\cS_{\kappa_i}|-1.
\end{align*}

\noindent(iv) Both $\cA_{i}$ and $\cB_{\nu}$ are arithmetic progressions.\medskip

Write $$\cA_i=\{a_1+kq_1:\,0\leq k\leq d_1-1\},\qquad\cB_\nu=\{a_2+kq_2:\,0\leq k\leq d_2-1\}.$$
Let $\langle q\rangle$ denote the subgroup generated by $q$. 
If $q_1\not\in\langle q_2\rangle$, then $\cB_\nu\subseteq a_2+\langle q_2\rangle$ and
$a_1+\langle q_2\rangle,\ldots,a_1+(d_1-1)q_1+\langle q_2\rangle$ are  disjoint cosets of $\langle q_2\rangle$.
According to the discussion in Case (iii), we can get $|\cA_i\splus{\cS_{\kappa_i}^*}\cB_\nu|\geq|\cA_i|+|\cB_\nu|-|\cS_{\kappa_i}|-1$. Similarly, the same result can be deduced from the assumption $q_2\not\in\langle q_1\rangle$, too.
 
Suppose that $\langle q_1\rangle=\langle q_2\rangle$. 
Let $K$ be a maximal proper subgroup of $\langle q_1\rangle$, i.e., $[\langle q_1\rangle:K]$ is prime.
Let $$\cA^\circ=\cA_i-a_1,\qquad\cB^\circ=\cB_\nu-a_2,\qquad\cS^\circ=(a_2-a_1+\cS_{\kappa_i}^*)\cap K.$$ Clearly $|\cA_i\splus{\cS_{\kappa_i}^*}\cB_\nu|=|\cA^\circ\splus{\cS^\circ}\cB^\circ|$. Let $$\widehat{\cA}^\circ=\{a+K:\,a\in \cA^\circ\},\quad\widehat{\cB}^\circ=\{b+K:\,b\in \cB^\circ\},\quad\widehat{\cS}^\circ=\{s+K:\,s\in \cS^\circ\}.$$ Since $q_1,q_2\not\in K$, we have $|\widehat{\cA}^\circ|=|{\cA}^\circ|$,
$|\widehat{\cB}^\circ|=|{\cB}^\circ|$ and
$|\widehat{\cS}^\circ|\leq |{\cS}^\circ|$. Since $[\langle q_1\rangle:K]$ is prime, by Lemma \ref{ABSFpa}, we get
\begin{align*}
|\cA_i\splus{\cS_{\kappa_i}^*}\cB_\nu|\geq|\widehat{\cA}^\circ \splus{{\widehat{\cS}^\circ}}\widehat{\cB}^\circ|\geq
|\widehat{\cA}^\circ|+|\widehat{\cB}^\circ|-|\widehat{\cS}^\circ|-2\geq 
|\cA_i|+|\cB_\nu|-|\cS_{\kappa_i}|-2,
\end{align*}
which implies $|\cS_{\kappa_i}|=1$ by (\ref{AiBnuSki3}).\medskip

Now we have deduced that either $$|\cA_i\splus{\cS_{\kappa_i}^*}\cB_\nu|\geq|\cA_i|+|\cB_\nu|-|\cS_{\kappa_i}|-1>|\cA_i|+|\cB_\nu|-3|\cS_{\kappa_i}|$$
which leads to a contradiction to (\ref{AiBnuSki3}), or 
$$|\cA_i\splus{\cS_{\kappa_i}^*}\cB_\nu|=|\cA_i|+|\cB_\nu|-|\cS_{\kappa_i}|-2$$
which implies $|\cS_{\kappa_i}|=1$ from (\ref{AiBnuSki3}).
Since $\bar{a}_i-\bar{b}_\nu\in\bar{S}$ for each $2\leq i\leq m$,
we get $\{\bar{a}_2,\ldots,\bar{a}_m\}=\bar{b}_\nu+\bar{S}$. Hence we must have $$|\cS_{k}|=1$$ for each $1\leq k\leq h$.

If $m=h+1<p(G)$, then
$$
|\bar{A}+\bar{B}|\geq\min\{2m-1,p(G)\}>m.
$$
So there exist
$2\leq j\leq m$ and $1\leq k\leq m$ such that 
$$
\bar{a}_j+\bar{b}_k\in(\bar{A}+\bar{B})\setminus\{\bar{a}_1+\bar{b}_1,\ldots,\bar{a}_1+\bar{b}_m\}.
$$
Assume that $\bar{a}_j-\bar{b}_k=\bar{s}_{\lambda_0}$. Let $\cS_{\lambda_0}^*=s_{\lambda_0}+b_k-a_j+\cS_{\lambda_0}$.
In view of (\ref{ASBrm}), we have
\begin{align*}
|A\splus{S}B|\geq&|\cA_j\splus{\cS_{\lambda_0}^*}\cB_{k}|+\sum_{k=1}^r|\cA_1\splus{\cS_{\lambda_k}^*}\cB_{\mu_k}|+
\sum_{\substack{1\leq\nu\leq m\\ \nu\not\in\{\mu_1,\ldots,\mu_r\}}}|\cA_1+\cB_\nu|\notag\\
\geq&|\cA_j\splus{\cS_{\lambda_0}^*}\cB_{k}|+|A|+|B|-3|S|-1.
\end{align*}
Since $|\cA_j|\geq 2$ and $|\cS_{\lambda_0}^*|=1$, $\cA_j\splus{\cS_{\lambda_0}^*}\cB_{k}\neq\emptyset$, i.e.,
$|A\splus{S}B|\geq |A|+|B|-3|S|$.

Suppose that $m=p(G)$. Then we also have $\cA_1\splus{\cS_{\lambda_1}^*}\cB_{\mu_1}\neq\emptyset$. Recall that $\bar{A}\splus{\bar{S}}\bar{B}=\{\bar{a}_1+\bar{b}_k:\,1\leq k\leq m,\ k\neq\mu_1\}$. We get
\begin{align*}
|A\splus{S}B|\geq&|\cA_1\splus{\cS_{\lambda_1}^*}\cB_{\mu_1}|+(m-1)\geq p(G).
\end{align*}
\end{proof}

Let us briefly explain how to extend Theorem \ref{MainT1} to finite non-commutative groups.
Suppose that $G$ is a finite group and $H$ is a non-trivial subgroup of $G$.
Note that for $a,b\in G$ and $\cA,\cB\subseteq H$, 
$$
(a+\cA)+(b+\cB)=(a+b)+\big(\psi_b(\cA)+\cB\big),
$$
where $\psi_b:\, x\mapsto -b+x+b$ is an inner automorphism of $G$. So we have to study the restricted sumset
$$
A\splus{\sigma,S} B:=\{a+b:\, a\in A,\ b\in B,\ a-\sigma(b)\not\in S\},
$$
where $\sigma$ is an automorphism of $G$. If $a-b\in s+H$ and $\cS\subseteq H$, then we have
$$
(a+\cA)\splus{\sigma,s+\cS}(b+\cB)=
(a+b)+(\psi_b(\cA)\splus{\psi_b\sigma,\cS^*}\cB),
$$
where $\cS^*=\psi_{\sigma(b)+b}\big((\psi_b-a)+s+\cS\big)$.

On the other hand, in order to use the induction process, 
Balister and Wheeler proved the following lemma.
\begin{lemma}[{\cite[Theorem 3.2]{BW09}}]\label{bwl}
Suppose that $G$ is a finite group of odd order and $\sigma$ is an automorphism of $G$. Then there exists
a proper normal subgroup $H$ of $G$ satisfying that

\medskip\noindent (1) $\sigma(H)=H$.

\medskip\noindent (2) $G/H$ is isomorphic to the additive group of some finite field $\F_{p^\alpha}$.

\medskip\noindent (3) Let $\chi$ denote the isomorphism from $G/H$ to the additive group of $\F_{p^\alpha}$.

\medskip\noindent Then there exists some $\gamma\in\F_{p^\alpha}\setminus\{0\}$ such that
$\chi(\sigma(\bar{a}))=\gamma\cdot\chi(\bar{a})$ for each $\bar{a}\in G/H$.
\end{lemma}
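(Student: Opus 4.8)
The plan is to choose $H$ to be a \emph{maximal} proper $\sigma$-invariant normal subgroup of $G$ and then to read off all three conclusions from the structure of the quotient $\bar G=G/H$. Such an $H$ exists because $G$ is finite and $\{e\}$ is $\sigma$-invariant and normal, so the (finite, nonempty) family of proper $\sigma$-invariant normal subgroups has a maximal element, which is proper and hence gives $\bar G\neq 1$. Writing $C=\langle\sigma\rangle$ for the finite cyclic group of automorphisms generated by $\sigma$, a subgroup is $\sigma$-invariant and normal iff it is $C$-invariant and normal, and maximality of $H$ means that $\bar G$ has no proper nontrivial $\bar\sigma$-invariant normal subgroup, where $\bar\sigma$ is the automorphism of $\bar G$ induced by $\sigma$. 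Conclusion (1) is immediate, since $H$ was chosen $\sigma$-invariant and normal.

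The main structural input is the Feit--Thompson odd order theorem: since $|G|$ is odd, $G$ is solvable, hence so is the quotient $\bar G$. First I would take a minimal normal subgroup $M\triangleleft\bar G$; solvability forces $M$ to be an elementary abelian $p$-group for some prime $p$. The product $N$ of the finitely many $\bar\sigma$-translates $\bar\sigma^i(M)$ is $\bar\sigma$-invariant, normal, and nontrivial, so $N=\bar G$ by maximality of $H$. Each $\bar\sigma^i(M)$ is again a minimal normal subgroup of $\bar G$ of the same order, hence an elementary abelian $p$-group; and distinct minimal normal subgroups intersect trivially and therefore commute, so $\bar G=N$ is itself an elementary abelian $p$-group, i.e.\ $\bar G\cong\F_p^\alpha$, the additive group of $\F_{p^\alpha}$. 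This gives conclusion (2).

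For conclusion (3) I would regard $V:=\bar G$ as an $\alpha$-dimensional vector space over $\F_p$ on which $\bar\sigma$ acts as an $\F_p$-linear automorphism. Since $\bar G$ is abelian, every subgroup is normal, and being a subgroup of an elementary abelian $p$-group it is an $\F_p$-subspace; thus the $\bar\sigma$-invariant subspaces of $V$ are exactly the $\bar\sigma$-invariant normal subgroups of $\bar G$, of which only $0$ and $V$ remain. Hence $V$ is an irreducible module over the polynomial ring $\F_p[x]$, with $x$ acting as $\bar\sigma$. By the structure theory of modules over the PID $\F_p[x]$, irreducibility yields $V\cong\F_p[x]/(f)$ for an irreducible polynomial $f$ of degree $\alpha$; the ring on the right is the field $\F_{p^\alpha}$, and under this identification $\bar\sigma=x$ acts as multiplication by $\gamma$, the residue class of $x$, which is a root of $f$. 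Because $\bar\sigma$ is invertible, the constant term of $f$ is nonzero, so $\gamma\neq 0$. Taking $\chi$ to be this module isomorphism $V\to\F_{p^\alpha}$ gives $\chi(\bar\sigma(\bar a))=\gamma\,\chi(\bar a)$ for all $\bar a$, as required.

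The one genuinely deep ingredient is the appeal to Feit--Thompson to guarantee solvability, and hence that minimal normal subgroups are elementary abelian; this is exactly where the odd-order hypothesis is used, and without it $M$ could be a nonabelian simple group and the argument would collapse. Everything after that step is routine: the commuting of distinct minimal normal subgroups, and the passage from an irreducible $\F_p[\bar\sigma]$-module to a field extension with scalar action. The only point needing a little care is verifying that the lattice of $\bar\sigma$-invariant objects coincides at the group and module levels, which is automatic once $\bar G$ is known to be elementary abelian.
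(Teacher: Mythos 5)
Your proof is correct. Note, however, that the paper does not prove this lemma at all: it is imported verbatim from Balister--Wheeler \cite[Theorem 3.2]{BW09}, and your argument is essentially a reconstruction of their proof --- Feit--Thompson to get solvability from the odd-order hypothesis, reduction to an elementary abelian quotient (you do this via a maximal $\sigma$-invariant normal subgroup $H$ and a minimal normal subgroup of $G/H$, where they descend through characteristic subgroups, but the substance is identical), and then identification of the resulting irreducible $\F_p[x]$-module with $\F_p[x]/(f)\cong\F_{p^\alpha}$ on which $\bar\sigma$ acts as multiplication by a nonzero root $\gamma$ of $f$. The only pedantic caveat is that the statement implicitly assumes $G$ is nontrivial, since otherwise no proper subgroup $H$ exists.
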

With help of Lemmas \ref{ABSFpa}, \ref{inverse} and \ref{bwl}, we can obtain that the following generalization of Theorem \ref{MainT1} for
general finite groups.
\begin{theorem}
Suppose that $A$, $B$ and $S$ are non-empty subsets of a finite group $G$.
Let $\sigma$ be an automorphism of $G$ with odd order.
Then
\begin{equation}
|A\splus{\sigma,S} B|\geq\min\{|A|+|B|-3|S|,p(G)\}.
\end{equation}
\end{theorem}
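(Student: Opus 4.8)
The plan is to reprove Theorem \ref{MainT1} verbatim, but for the twisted sumset $A\splus{\sigma,S}B$, inducting on $|G|$ with the theorem itself (with $\sigma$ ranging over odd-order automorphisms) as the inductive statement, and using Lemma \ref{bwl} to supply the subgroup on which to induct. First I would observe that the opening reductions in the proof of Theorem \ref{MainT1} never use commutativity: the trivial bound $|A\splus{\sigma,S}B|\ge\max\{|A|,|B|\}-|S|$ still holds, since for fixed $b\in B$ the forbidden sums $a+b$ have $a\in\sigma(b)+S$, hence number at most $|S|$, and symmetrically for fixed $a$. So I may assume $|S|<p(G)$ and $|A|+|B|-3|S|\le p(G)$. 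When $|G|$ is even, $p(G)=2$ forces $|S|=1$ and $|A|+|B|\le 5$, and the trivial bound already yields $\min\{|A|+|B|-3,2\}$; hence from now on $|G|$ is odd and $p(G)\ge 3$.

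Next I would apply Lemma \ref{bwl} to the odd-order automorphism $\sigma$ of the odd-order group $G$, obtaining a proper normal subgroup $H$ with $\sigma(H)=H$, with $G/H\cong\F_{p^\alpha}^{+}$ where $p=p(G)$, and with $\sigma$ acting on $G/H$ as multiplication by some $\gamma\in\F_{p^\alpha}\setminus\{0\}$. Decomposing $A,B,S$ into $H$-cosets as in Theorem \ref{MainT1}, the constraint $a-\sigma(b)\in S$ becomes $\bar a-\gamma\bar b\in\bar S$ in the quotient, so $\bar A\splus{\bar\sigma,\bar S}\bar B$ is exactly the $\gamma$-twisted restricted sumset of Lemma \ref{ABSFpa}. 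The decisive point is that this lemma is applicable: it requires $\gamma\neq 0,-1$, and while $\gamma\ne0$ is automatic, the value $\gamma=-1$ is excluded precisely because $\sigma$ has \emph{odd} order---the multiplier $\gamma$ has order dividing that of $\sigma$ (it is the order of the action of $\sigma$ on $G/H$), hence odd, whereas $-1$ has order $2$ since $p\ge 3$. Thus the quotient estimate $|\bar A\splus{\bar\sigma,\bar S}\bar B|\ge\min\{m+n-h-2,p\}$ holds, and the twisted analogue of Lemma \ref{ABSpa2}---proved by the identical argument, as that proof only invokes Lemma \ref{ABSFpa}, which already allows arbitrary admissible $\gamma$---furnishes the same Hall system of coset representatives.

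To pass to $H$ inside a single pair of cosets I use the identity from the discussion above,
$$
(a+\cA)\splus{\sigma,\,s+\cS}(b+\cB)=(a+b)+\bigl(\psi_b(\cA)\splus{\psi_b\sigma,\,\cS^{*}}\cB\bigr),
$$
with $\psi_b$ the inner automorphism $x\mapsto -b+x+b$, $|\cS^{*}|=|\cS|$ and $|\psi_b(\cA)|=|\cA|$. To apply the induction hypothesis to $\psi_b(\cA)\splus{\psi_b\sigma,\cS^{*}}\cB$ I must know that $\psi_b\sigma$ restricts to an \emph{odd-order} automorphism of $H$; this is again where oddness enters. Since $|G|$ is odd, $\mathrm{Inn}(G)\cong G/Z(G)$ has odd order, and as $\mathrm{Inn}(G)\trianglelefteq\mathrm{Aut}(G)$ the subgroup $\mathrm{Inn}(G)\langle\sigma\rangle$ has odd order; therefore $\psi_b\sigma$ has odd order, and so does its restriction to the invariant subgroup $H$. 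Together with $p(H)\ge p(G)$ this yields the per-coset bound $|\cA|+|\cB|-3|\cS|$, exactly matching the abelian argument.

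With the quotient estimate, the Hall selection, and the per-coset induction bound all transcribed, the four-case analysis (I)--(IV) of Theorem \ref{MainT1} carries over essentially word for word, since every inequality there manipulates only the cardinalities $|\cA_i|,|\cB_j|,|\cS_k|$ and the coset counts $m,n,h,r,\tau$. The one genuinely delicate step remains Case (IV) with $\tau=0$, $|\cA_1|=\cdots=|\cA_m|$ and $m=n=h+1$, where one invokes the inverse theorem Lemma \ref{inverse}---already stated for arbitrary finite groups---and runs through its sub-cases (i)--(iv), each using only $|\cA_i\pm\cB_\nu|$ and the field lemma on a prime-index subquotient of $H$. I expect the main obstacle to be precisely the bookkeeping of the inner-automorphism twists: at every use of the induction hypothesis one must check that the companion automorphism of $H$ still has odd order and that the twist leaves $|\cS|$ unchanged, and in Case (IV) one must verify that the arithmetic-progression/subgroup dichotomy of Lemma \ref{inverse} is compatible with $\psi_b\sigma$. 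Given the odd-order reduction these checks are routine, and the bound $\min\{|A|+|B|-3|S|,p(G)\}$ follows.
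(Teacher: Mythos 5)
Your proposal is correct and takes essentially the same approach as the paper, which only sketches this result as a transcription of the proof of Theorem \ref{MainT1} using Lemmas \ref{ABSFpa}, \ref{inverse} and \ref{bwl}. The details you supply---that the odd order of $\sigma$ forces the multiplier $\gamma$ on $G/H$ to avoid $-1$ so that Lemma \ref{ABSFpa} applies, and that $\psi_b\sigma$ (hence its restriction to the invariant subgroup $H$) still has odd order---are precisely the points the paper's sketch leaves implicit.
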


\section{Proof of Theorem \ref{MainT2} for $\Z_{p^\alpha}$}
\setcounter{lemma}{0}\setcounter{theorem}{0}\setcounter{corollary}{0}
\setcounter{equation}{0}

\begin{lemma}\label{ABSpa} Let $p$ be a prime and $\alpha\geq 1$.
Suppose that $A,B,S$ are non-empty subsets of $\Z_{p^{\alpha}}$. Then
$$
|A\splus{S}B|\geq\min\{|A|+|B|-2|S|-1,p\}.
$$
\end{lemma}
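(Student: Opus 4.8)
The plan is to induct on $\alpha$, closely following the proof of Theorem \ref{MainT1} but exploiting the sharper estimates peculiar to $\Z_{p^\alpha}$. For the base case $\alpha=1$ we have $\Z_p=\F_p$, and for $p\geq 3$ Lemma \ref{ABSFpa} applied with $\gamma=1$ already gives the stronger bound $|A\splus{S}B|\geq\min\{|A|+|B|-|S|-2,p\}$; since $|S|\geq 1$ this dominates the asserted $\min\{|A|+|B|-2|S|-1,p\}$, and the cases $p=2$ and $|S|=p$ are immediate.

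For the inductive step I would take $H=p\Z_{p^\alpha}$, the subgroup of index $p$, so that $G/H\cong\F_p$ while $H\cong\Z_{p^{\alpha-1}}$, and decompose $A=\bigcup_{i=1}^m(a_i+\cA_i)$, $B=\bigcup_{i=1}^n(b_i+\cB_i)$, $S=\bigcup_{i=1}^h(s_i+\cS_i)$ into $H$-cosets exactly as in Section 2, ordering $|\cA_1|\geq\cdots\geq|\cA_m|$ and, using $A\splus{S}B=(-B)\splus{(-S)}(-A)$, assuming $n\geq m$. After reductions analogous to those in Theorem \ref{MainT1} one may suppose $|A|+|B|-2|S|-1\leq p$ and $h=|\bar S|<p$. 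In the quotient field $\F_p$ I would apply Lemma \ref{ABSFpa} to obtain $|\bar A\splus{\bar S}\bar B|\geq m+n-h-2$ (we may assume this is at most $p$), and then Lemma \ref{ABSpa2} together with Hall's theorem to extract distinct representative cosets $\bar a_{\gamma_j}+\bar b_{\eta_j}$, $1\leq j\leq\tau$, lying off the row of $\bar a_1$ and satisfying $\gamma_j\in\{2,\dots,h+2,j+h+2\}$.

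The core is the coset-by-coset count. Whenever $\bar a_1-\bar b_{\mu_k}\in\bar S$ the contribution of that coset is $|\cA_1\splus{\cS_k^*}\cB_{\mu_k}|$, and the decisive improvement over Theorem \ref{MainT1} is that the induction hypothesis in $H\cong\Z_{p^{\alpha-1}}$ now furnishes the bound $|\cA_1|+|\cB_{\mu_k}|-2|\cS_k|-1$ in place of $|\cA_1|+|\cB_{\mu_k}|-3|\cS_k|$; for the remaining cosets I would use $|\cA_1+\cB_\nu|\geq|\cA_1|+|\cB_\nu|-1$ from the Kneser bound in $H$ (and the sharper $|\cB_{\mu_k}|-|\cS_k|$ when $|\cA_1|=1$). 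Summing the contributions over the chosen distinct cosets collapses everything to $|A\splus{S}B|\geq\Psi-2\sum_k|\cS_{\lambda_k}|\geq\Psi-2|S|$, where $\Psi=n(|\cA_1|-1)+|B|+\sum_{j=1}^\tau|\cA_{\gamma_j}|$, so the entire lemma reduces to the purely combinatorial inequality $\Psi\geq|A|+|B|-1$.

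I expect this last inequality to be the crux. It is strictly tighter than the bound $\Psi\geq|A|+|B|-h$ that sufficed in Theorem \ref{MainT1}, so there is almost no slack to spend: feeding in the matching estimate $\sum_j|\cA_{\gamma_j}|\geq\sum_{k=h+3}^m|\cA_k|$ settles the generic situation only when $m$ (hence $n$) is large relative to $h$, roughly $m\geq 2h+3$. The complementary small range of $m,n$ --- culminating in the extremal configuration $\tau=0$, $|\cA_1|=\cdots=|\cA_m|$, $n=m=h+1$ that was already the hardest point of Theorem \ref{MainT1} --- is where the real work lies. There I would repeat the degeneracy analysis of that proof, invoking the inverse theorem Lemma \ref{inverse}: equality throughout would force each $\cA_1,\cB_\nu$ to be an arithmetic progression or a full coset of a subgroup of $H$, which in turn forces $|\cS_k|=1$ for every $k$ and then $\bar S=G/H$, contradicting $h<m\leq p(G)$. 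The bookkeeping is heavier than in Theorem \ref{MainT1} precisely because the $2|S|$-loss leaves no margin, but the rigidly nested subgroup structure of $\Z_{p^\alpha}$ keeps the list of extremal configurations short.
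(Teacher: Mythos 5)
Your proposal does not reproduce an argument from the paper: the paper offers no proof of Lemma \ref{ABSpa} at all, it simply cites \cite[Remark 1.3]{PS06}. So the question is whether your self-contained induction closes, and it does not. The crux you isolate, $\Psi\geq|A|+|B|-1$ with $\Psi=n(|\cA_1|-1)+|B|+\sum_{j=1}^{\tau}|\cA_{\gamma_j}|$, is not merely ``tight with no slack'' --- it is false, with a deficit that grows with $h$. In the configuration $\tau=0$, $n=m=h+1$, $|\cA_1|=\cdots=|\cA_m|$ one gets $\Psi=m|\cA_1|-m+|B|=|A|+|B|-h-1$, a shortfall of $h$; and already in your ``generic'' range, if $|\cA_1|=2$ and $\tau=m-h-2$ then $\Psi=|A|+|B|+n-2h-4$, a shortfall of up to $2h+3-n$. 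The mechanism you propose to absorb the shortfall --- the $\delta$-argument of Section 2 combined with Lemma \ref{inverse} --- recovers exactly $1$: it either exhibits one inequality that is strict by one, or produces one extra element of $A\splus{S}B$. That is why it suffices for the $3|S|$ bound, where the same extremal case has deficit $1$, but it cannot recover a deficit of $h$. To push the loss per restricted coset from $3|\cS_k|-1$ down to $2|\cS_k|+1$ and still win, you would need an inverse theorem describing equality in $|\cA\splus{\cS}\cB|=|\cA|+|\cB|-2|\cS|-1$ inside $H\cong\Z_{p^{\alpha-1}}$, and nothing of the sort is available in the paper. Your closing claim that ``equality throughout forces $\bar S=G/H$'' misreads where that contradiction is used in Section 2: there it only rules out the sub-configuration $\bar a_i-\bar b_\nu\not\in\bar S$ for the one distinguished $\nu$; the extremal configuration itself (with every $|\cS_k|=1$) survives and is handled by gaining a single element.

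Two smaller points. First, the reduction to $|S|<p$ is not ``immediate'' here as it is for Theorem \ref{MainT1}: from $|A\splus{S}B|\geq\max\{|A|,|B|\}-|S|$ one only disposes of $|S|\geq p$ when $\min\{|A|,|B|\}\leq|S|+1$, and the remaining case needs an argument. Second, even granting the reductions, the base case and quotient step are fine (Lemma \ref{ABSFpa} with $\gamma=1$, $p\geq3$), so the failure is localized entirely in the small-$m,n$ regime --- but that regime is exactly where the lemma's content lies. As it stands the proposal is a plan with an acknowledged hole at its center, and the hole is quantitative, not just bookkeeping.
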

\begin{proof}
See \cite[Remark 1.3]{PS06}.
\end{proof}
\begin{lemma}[{\cite[Lemma 2.1]{DP}}]\label{ABmn1G}
Suppose that $A=\{a_1,\ldots,a_m\}$ and $B=\{b_1,\ldots,b_n\}$ are non-empty subsets of a finite abelian group $G$.
If $m+n-1\leq p(G)$, then the set
$(A+B)\setminus\{a_1+b_1,\ldots,a_1+b_n\}$
contains the distinct elements
$$
a_{2}+b_{j_2},a_{3}+b_{j_3},\ldots,a_{i_{m}}+b_{j_{m}}.
$$
\end{lemma}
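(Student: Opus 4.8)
The plan is to build the required collection of distinct sums by Hall's theorem, exactly paralleling the proof of Lemma~\ref{ABSpa2} but with the ordinary sumset and the Kneser bound $|X+B|\geq\min\{|X|+|B|-1,\,p(G)\}$ from the introduction in place of the restricted sumset and Lemma~\ref{ABSFpa}. If $m=1$ there is nothing to prove, so assume $m\geq2$. For each $i\in\{2,\ldots,m\}$ set
$$
X_i=\big(\{a_i\}+B\big)\setminus\{a_1+b_1,\ldots,a_1+b_n\}\subseteq (A+B)\setminus\{a_1+b_1,\ldots,a_1+b_n\}.
$$
A system of distinct representatives $x_i\in X_i$ for $i=2,\ldots,m$ is precisely a list of distinct elements $a_i+b_{j_i}$ of the desired form, so it suffices to verify Hall's condition $\big|\bigcup_{i\in I}X_i\big|\geq|I|$ for every non-empty $I\subseteq\{2,\ldots,m\}$.

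The heart of the matter is the cardinality estimate. Writing $A_I=\{a_i:\,i\in I\}$, I would not bound $\bigcup_{i\in I}X_i=(A_I+B)\setminus(a_1+B)$ through $|A_I+B|-n$, since that loses one element and yields only $|I|-1$. Instead, using $a_1\notin A_I$, I apply the Kneser bound to the enlarged set $A_I\cup\{a_1\}$:
\begin{align*}
\Big|\bigcup_{i\in I}X_i\Big|&=\big|\big((A_I\cup\{a_1\})+B\big)\setminus(a_1+B)\big|=\big|(A_I\cup\{a_1\})+B\big|-n\\
&\geq\min\{(|I|+1)+n-1,\,p(G)\}-n=|I|.
\end{align*}
Here the second equality is exact because $a_1+B\subseteq(A_I\cup\{a_1\})+B$ and $|a_1+B|=n$, while the final equality uses $|I|+n\leq(m-1)+n=m+n-1\leq p(G)$ to resolve the minimum in favour of its linear term. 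Hall's theorem now supplies the distinct representatives, completing the argument.

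The only real obstacle is this off-by-one phenomenon: the naive estimate $|A_I+B|-n$ falls one short of Hall's requirement. Folding $a_1$ into the sumset before deleting the full coset $a_1+B$ is exactly what recovers the missing element, and it is at this point, and only here, that the hypothesis $m+n-1\leq p(G)$ enters, guaranteeing that the Kneser bound is controlled by $|X|+|B|-1$ rather than by $p(G)$.
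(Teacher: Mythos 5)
Your argument is correct: the key point of adjoining $a_1$ to $A_I$ before subtracting the full coset $a_1+B$, so that the Cauchy--Davenport/Kneser bound yields exactly $|I|$ rather than $|I|-1$, is precisely what makes Hall's condition work, and the hypothesis $m+n-1\leq p(G)$ is used in the right place. The paper itself gives no proof of this lemma (it only cites \cite{DP}), but your route is essentially identical to the paper's own proof of the analogous Lemma \ref{ABSpa2}, which likewise folds the distinguished elements into each $X_k$ and invokes Hall's theorem.
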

\begin{lemma}\label{ABSmn3hG}
Suppose that $A=\{a_1,\ldots,a_m\}$, $B=\{b_1,\ldots,b_n\}$ and $S$ are non-empty subsets of a finite abelian group $G$.
Let $h=|S|$ and suppose that $m\geq 3h+1$. If $m+n-3h\leq p(G)$ and $h<p(G)$, then the set
$
(A\splus{S}B)\setminus\{a_1+b_1,\ldots,a_1+b_n\}
$
contains the distinct elements
$$
a_{i_1}+b_{j_1},a_{i_2}+b_{j_2},\ldots,a_{i_{m-3h}}+b_{j_{m-3h}}
$$
such that for each $1\leq k\leq m-3h$
$$
i_k\in\{2,3,\ldots,3h,k+3h\}.
$$
\end{lemma}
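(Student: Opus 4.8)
The plan is to imitate the proof of Lemma \ref{ABSpa2} step for step, the only change being that the finite-field Cauchy--Davenport bound of Lemma \ref{ABSFpa} is no longer available in a general abelian group and must be replaced by the unconditional bound of Theorem \ref{MainT1}; the combinatorial extraction is again carried out with Hall's theorem. Concretely, for each $1\leq k\leq m-3h$ I would set
$$
X_k=\big(\{a_1,a_2,\ldots,a_{3h},a_{k+3h}\}\splus{S}B\big)\setminus\{a_1+b_1,\ldots,a_1+b_n\}.
$$
Any element $x\in X_k$ lies in the restricted sumset of a set whose members are among $a_1,\ldots,a_{3h},a_{k+3h}$, so $x$ admits a valid representation $x=a_i+b_j$ with $a_i-b_j\not\in S$ and $i\in\{1,2,\ldots,3h,k+3h\}$; since $x$ has been deleted from the $a_1$-row, such a representation cannot use $a_1$ (otherwise $x=a_1+b_j$ would have been removed), forcing $i\in\{2,3,\ldots,3h,k+3h\}$. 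This is exactly the index pattern demanded by the conclusion.

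The key step is the verification of Hall's condition. For a non-empty $I\subseteq\{1,\ldots,m-3h\}$ put $A_I=\{a_1,\ldots,a_{3h}\}\cup\bigcup_{k\in I}\{a_{k+3h}\}$, a genuine subset of $A$ with $|A_I|=3h+|I|$ (the indices $k+3h$, $k\in I$, are distinct and exceed $3h$). Then
$$
\bigcup_{k\in I}X_k=\big(A_I\splus{S}B\big)\setminus\{a_1+b_1,\ldots,a_1+b_n\},
$$
and Theorem \ref{MainT1} applied to $A_I,B,S$ gives $|A_I\splus{S}B|\geq\min\{|A_I|+|B|-3|S|,\,p(G)\}$. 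Here the base size $3h$ is chosen precisely so that $3|S|=3h$ cancels it, leaving $|A_I|+|B|-3|S|=|I|+n$; and because $|I|+n\leq (m-3h)+n\leq p(G)$ by hypothesis, the minimum is this linear term. Deleting the $n$ elements of the $a_1$-row therefore yields
$$
\Big|\bigcup_{k\in I}X_k\Big|\geq (|I|+n)-n=|I|.
$$

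With Hall's condition established, Hall's theorem furnishes distinct $x_1,\ldots,x_{m-3h}$ with $x_k\in X_k$, and writing each $x_k=a_{i_k}+b_{j_k}$ in a valid representation avoiding $a_1$ produces the asserted distinct elements with $i_k\in\{2,\ldots,3h,k+3h\}$. I expect no serious obstacle: the argument is structurally identical to Lemma \ref{ABSpa2}. The only points needing care are (i) checking that the hypothesis $m+n-3h\leq p(G)$ (together with $|I|\leq m-3h$) is exactly what forces Theorem \ref{MainT1} to return the linear bound rather than $p(G)$, so that the cancellation $|A_I|+|B|-3|S|=|I|+n$ is clean, and (ii) confirming that the standing hypotheses $m\geq 3h+1$ and $h<p(G)$ keep the setup consistent---the former guarantees that the index range $\{1,\ldots,m-3h\}$ is non-empty and that $a_1,\ldots,a_{3h}$ are available as distinct elements of $A$, while the latter rules out degenerate choices of $S$ and ensures the bound supplied by Theorem \ref{MainT1} is the nontrivial one in every intermediate application.
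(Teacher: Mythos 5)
Your proposal is correct and is exactly the argument the paper intends: the paper's proof of this lemma is a one-line remark that it ``immediately follows from Theorem \ref{MainT1} by the same discussion as in the proof of Lemma \ref{ABSpa2}'' (the reference to Lemma \ref{ABSpa} there is evidently a typo for Lemma \ref{ABSpa2}), and you have carried out precisely that Hall-condition computation with the base block of size $3h$ chosen so that $3|S|$ cancels. No issues.
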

\begin{proof} This lemma immediately follows from Theorem \ref{MainT1} by using the same discussions in the proof of Lemma \ref{ABSpa}.
\end{proof}

\begin{proposition}\label{T2pa}
Let $A,B,S$ be non-empty subsets of $\Z_{p^\alpha}$. Suppose that
$$
\min\{|A|,|B|\}\geq 6|S|^2-5.
$$
Then
\begin{equation}
|A\splus{S}B|\geq \min\{|A|+|B|-|S|-2,p\}.
\end{equation}
\end{proposition}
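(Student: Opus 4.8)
The plan is to argue by induction on $\alpha$, the base case $\alpha=1$ being the Pan--Sun inequality (\ref{P-S}) for $\Z_p$: indeed $\min\{|A|,|B|\}\geq 6|S|^2-5$ together with $A,B\subseteq\Z_p$ forces $|S|<p$, so (\ref{P-S}) applies verbatim. Before the inductive step I would make the usual reductions: passing to subsets of $A$ and $B$ I may assume $|A|+|B|-|S|-2\leq p$, and I record the consequence $p\geq 12|S|^2-|S|-12$, so that $p$ is large compared with $h:=|S|$; I may also assume $h<p$.

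For the inductive step, let $H\leq\Z_{p^\alpha}$ be the subgroup of index $p$, so that $\bar G:=\Z_{p^\alpha}/H\cong\F_p$, and write $A=\bigcup_{i=1}^m(a_i+\cA_i)$, $B=\bigcup_{i=1}^n(b_i+\cB_i)$, $S=\bigcup_{i=1}^h(s_i+\cS_i)$ with the pieces in $H\cong\Z_{p^{\alpha-1}}$, exactly as in the proof of Theorem \ref{MainT1}. Order the pieces so that $|\cA_1|\geq\cdots\geq|\cA_m|$ and assume $n\geq m$. In the quotient I apply the sharp $\F_p$-bound (\ref{P-S}) to get $|\bar A\splus{\bar S}\bar B|\geq m+n-|\bar S|-2$, together with the Hall-type matching of Lemma \ref{ABSpa2} (valid since $\bar G\cong\F_p$), producing the ``dominant row plus matching'' lower bound familiar from Theorem \ref{MainT1}: the coset $\bar a_1$ paired with all of $\bar B$, plus matched pairs $\bar a_{\gamma_j}+\bar b_{\eta_j}$ with $\gamma_j\geq j+|\bar S|+2$. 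For any pair $(\bar a_i,\bar b_j)$ with $\bar a_i-\bar b_j\notin\bar S$ the restriction is vacuous after lifting, so it contributes the full $|\cA_i+\cB_j|\geq|\cA_i|+|\cB_j|-1$ by Kneser's theorem in $H$; the only lossy pairs are the at most $|\bar S|$ restricted ones in the dominant row.

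The whole improvement from the $|A|+|B|-2|S|-1$ of Lemma \ref{ABSpa} to the desired $|A|+|B|-|S|-2$ must come from these restricted pairs, and this is where $\min\{|A|,|B|\}\geq 6|S|^2-5$ enters, through the dichotomy it forces: either $A$ meets roughly $3h$ cosets of $H$, or its largest piece satisfies $|\cA_1|\gtrsim 2h$ (were both to fail, then $|A|\leq m|\cA_1|<6h^2-5$). In the concentrated branch $|\cA_1|$ is large, and I would absorb the surplus loss of the restricted pairs into the slack $(n-|\bar S|-2)|\cA_1|$ coming from pairing the big piece against all of $\bar B$, estimating each restricted piece only by the unconditional Lemma \ref{ABSpa}; in the spread branch I would instead lean on the denser matching of Lemma \ref{ABSmn3hG} together with the inductive hypothesis applied inside $H\cong\Z_{p^{\alpha-1}}$ to the pieces, whose largeness is supplied by the same counting. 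The analogous dichotomy for $B$ through $n$ and $|\cB_1|$ covers the remaining cases.

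The hard part will be the bookkeeping of the restricted pairs, where the inductive bound $|\cA_1\splus{\cS_k^*}\cB_j|\geq|\cA_1|+|\cB_j|-|\cS_k|-2$ demands both pieces be large \emph{simultaneously} — an asymmetric requirement not guaranteed by the largeness of $A$ and $B$ alone. To close this I expect to invoke the inverse theorem (Lemma \ref{inverse}) in the boundary cases: extremality of a restricted piece-sumset forces $\cA_1$ and $\cB_j$ into arithmetic progressions or cosets of a subgroup of $H$, configurations that can either be excluded or treated directly via the field twist of Lemma \ref{ABSFpa}, once $|A|$ and $|B|$ exceed the quadratic threshold. Checking that the accounting closes uniformly across all four combinations of the two dichotomies, and in the residual small-piece cases, is the technical core of the proof.
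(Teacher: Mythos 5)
Your decomposition is the reverse of the paper's, and that is where the argument breaks. You quotient by the subgroup $H$ of \emph{index} $p$, so the quotient is $\F_p$ (where the sharp bound (\ref{P-S}) holds) but the pieces $\cA_i,\cB_j,\cS_k$ live in $H\cong\Z_{p^{\alpha-1}}$, where the only unconditional estimate is $|\cA\splus{\cS}\cB|\geq|\cA|+|\cB|-2|\cS|-1$ (Lemma \ref{ABSpa}); the sharp $-|\cS|-2$ bound inside $H$ is available only through the induction hypothesis, which requires \emph{both} pieces to be large. As you note yourself, nothing forces $|\cB_{\mu_k}|$ to be large, and this is fatal: each restricted pair in the dominant row then loses an extra $|\cS_{\lambda_k}|-1$, up to $|S|-h$ in total, which is of the same order as the entire improvement you are trying to make over the already-known bound $|A|+|B|-2|S|-1$, and you exhibit no compensating surplus. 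The tools you propose for closing this do not apply: Lemma \ref{inverse} characterizes equality in the \emph{unrestricted} Cauchy--Davenport bound and says nothing about $\cA_1\splus{\cS^*}\cB_{\mu_k}$ when $\cB_{\mu_k}$ is small but no sumset is extremal; and Lemma \ref{ABSFpa} lives in the elementary abelian group $\F_{p^{\alpha}}$, not in the cyclic group $\Z_{p^{\alpha-1}}$, so there is no ``field twist'' to invoke inside $H$.

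The paper quotients the other way: $H$ is the subgroup of \emph{order} $p$, so every piece lies in $\Z_p$, where the sharp Pan--Sun bound $|\cA\splus{\cS}\cB|\geq|\cA|+|\cB|-|\cS|-2$ holds unconditionally for pieces of any size, and only the quotient $\Z_{p^{\alpha-1}}$ suffers the weak $-2|S|-1$ bound. The loss there is a loss in the \emph{number of matched cosets} ($\tau\geq m-2h-1$ rather than $m-h-2$), and that is exactly what the hypothesis $\min\{|A|,|B|\}\geq 6|S|^2-5$ is calibrated to absorb, via the dichotomy: either $n$ is large compared with $h$ (many matched pairs, each contributing at least $|\cA_{\gamma_j}|$), or $n$ is small and $|\cA_1|\geq|A|/n$ is large, making the slack $(n-2h-1)|\cA_1|-n-h+2$ nonnegative; the induction on $\alpha$ is only needed in the degenerate case $|\cA_1|=|\cB_1|=1$. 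If you insist on your quotient you would have to iterate it all the way down to put the pieces in $\Z_p$, which just reproduces the paper's choice; I would redo the inductive step with $|H|=p$.
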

\begin{proof}

We use an induction on $\alpha$.  Assume that the assertion of Proposition \ref{T2pa} is true for $\Z_{p^{\alpha-1}}$.
In view of (\ref{B-W}), we alway assume that $|S|\geq 2$.
Note that 
$$
|A\splus{S}B|\geq|A|-|S|\geq 6|S|^2-|S|-5\geq |S|
$$
when $|S|\geq 2$. So we only need to consider that case $|S|<p$.

Let $H$ be the subgroup of $\Z_{p^\alpha}$ of order $p$. For $x\in \Z_{p^\alpha}$, let $\bar{x}$ denote the coset $x+H$, and let $\bar{X}:=\{\bar{x}:\,x\in X\}$ for $X\subseteq\Z_{p^\alpha}$. Assume that $$\bar{A}=\{\bar{a}_1,\ldots,\bar{a}_m\},\qquad \bar{B}=\{\bar{b}_1,\ldots,\bar{b}_n\},\qquad\bar{S}=\{\bar{s}_1,\ldots,\bar{s}_h\}.$$
By exchanging $A$ and $B$, we may assume that $m\leq n$.
Write
$$
A=\bigcup_{i=1}^m(a_i+\cA_i),\qquad B=\bigcup_{i=1}^n(b_i+\cB_i),\qquad
S=\bigcup_{i=1}^h(s_i+\cS_i),
$$
where those $\cA_i,\cB_i,\cS_i\subseteq H$.
Moreover, assume that
$$
|\cA_1|\geq|\cA_2|\geq\cdots\geq|\cA_m|,\qquad
|\cB_1|\geq|\cB_2|\geq\cdots\geq|\cB_n|.
$$

Suppose that $m=n=1$. Let $\cT=(b_1-a_1)+s_i+\cS_i$ if $\bar{a}_1-\bar{b}_1=\bar{s}_i$ for some $1\leq i\leq h$,
and let $\cT=\emptyset$ if $\bar{a}_1-\bar{b}_1\not\in\bar{\cS}$. Then
\begin{align*}
|A\splus{S}B|=|\cA_1\splus{\cT}\cB_1|\geq&\min\{|\cA_1|+|\cB_1|-|\cT|-2,p\}\\
\geq&\min\{|A|+|B|-|S|-2,p\}.
\end{align*}
Below we always assume that either $m>1$, or $n>m$.

\medskip\noindent (I) $|\cA_1|\geq 2$.

\medskip
Assume that $\bar{a}_1-\bar{b}_{\mu_k}=\bar{s}_{\lambda_k}$ for each $1\leq k\leq r$, and
$\bar{a}_1-\bar{b}_\nu\not\in\bar{S}$ if $1\leq\nu\leq n$ and $\nu\not\in\{\mu_1,\ldots,\mu_r\}$.
Let $\tau=|(\bar{A}\splus{\bar{S}}\bar{B})\setminus\{\bar{a}_1+\bar{b}_1,\ldots,\bar{a}_1+\bar{b}_n\}|$.
By Lemma \ref{ABSpa}, we have $\tau\geq\max\{m-2h-1,0\}$.
Furthermore, when $m\geq 2h+2$, we may assume that
$
\bar{a}_{\gamma_1}+\bar{b}_{\eta_1},\ldots,\bar{a}_{\gamma_{m-3h}}+\bar{b}_{\eta_{m-2h-1}}
$
are distinct elements of $(\bar{A}\splus{\bar{S}}\bar{B})\setminus(\bar{a}_1+\bar{B})$ with
$\gamma_j\leq j+2h+1$ for each $1\leq j\leq m-2h-1$.

Letting $\cS_{\lambda_k}^*=(b_{\mu_k}-a_k)+s_{\lambda_k}+\cS_{\lambda_k}$, we have
$$
|A\splus{S}B|\geq\sum_{k=1}^r|\cA_1\splus{\cS_{\lambda_k}^*}\cB_{\mu_k}|+
\sum_{\substack{1\leq\nu\leq m\\ \nu\not\in\{\mu_1,\ldots,\mu_r\}}}|\cA_1+\cB_\nu|+\sum_{j=1}^{\tau}|\cA_{\gamma_j}+\cB_{\eta_j}|.
$$
We may assume that those
$$
|\cA_1\splus{\cS_{\lambda_k}^*}\cB_{\mu_k}|,\ |\cA_1+\cB_\nu|,\ |\cA_{\gamma_j}+\cB_{\eta_j}|<p,
$$
otherwise we immediately get $|A\splus{S}B|\geq p$. Thus
\begin{align}
|A\splus{S}B|\geq&\sum_{k=1}^r(|\cA_1|+|\cB_{\mu_k}|-|\cS_{\lambda_k}|-2)+
\sum_{\substack{1\leq\nu\leq m\\ \nu\not\in\{\mu_1,\ldots,\mu_r\}}}(|\cA_1|+|\cB_\nu|-1)+\sum_{j=1}^{\tau}|\cA_{\gamma_j}|\notag\\
\geq&n|\cA_1|+|B|+\sum_{j=1}^{m-3h}|\cA_{\gamma_j}|-\sum_{k=1}^r(|\cS_{\lambda_k}|+1)-n.\label{A1Bm3hSk}
\end{align}
When $m\geq 2h+2$, we obtain that
\begin{align}\label{ASBpam2h2n2h1}
|A\splus{S}B|\geq&(2h+1)|\cA_1|+\sum_{i=2h+2}^{m}|\cA_{i}|+|B|+(n-2h-1)|\cA_1|-n-\sum_{k=1}^h(|\cS_{k}|+1)\notag\\
\geq&|A|+|B|-|S|-2+\big((n-2h-1)|\cA_1|-n-h+2\big).
\end{align}
While if $m\leq 2h+1$, we also have
\begin{align}\label{ASBpam2h1nm}
|A\splus{S}B|\geq&n|\cA_1|+|B|-\sum_{k=1}^r(|\cS_{\lambda_k}|+1)-n\notag\\
\geq&m|\cA_1|+|B|+(n-m)|\cA_1|-\sum_{k=1}^h(|\cS_{k}|+1)-n\notag\\
\geq&|A|+|B|-|S|-2+\big((n-m)|\cA_1|-n-h+2\big).
\end{align}

If $n\geq5h$, then
$$
(n-2h-1)|\cA_1|-n-h+2\geq 2(n-2h-1)-n-h+2\geq 0.
$$
Below assume that $n\leq 5h-1$. Note that
the function $(x-2h-1)|A|/x-x$ is increasing on $(0,\sqrt{(2h+1)|A|}]$ and is 
decreasing on $[\sqrt{(2h+1)|A|},+\infty)$. Since $m|\cA_1|\geq |A|$,
\begin{align*}
(n-2h-1)|\cA_1|-n\geq&(n-2h-1)\cdot\frac{|A|}{m}-n\\
\geq&
(n-2h-1)\cdot\frac{|A|}{n}-n\\
\geq&\min\bigg\{\frac{|A|}{2h+2}-(2h+2),\ \frac{(3h-2)|A|}{5h-1}-(5h-1)\bigg\}.
\end{align*}

\medskip\noindent (i) $|S|\geq h+1$.\medskip
 
Suppose that $m\geq 2h+2$.
Since $|A|\geq 6|S|^2-5>6h(h+2),$ we have
$$
\frac{|A|}{2h+2}-(2h+2)\geq 3h-(2h-2)=h-2.
$$
And it is easy to check
$$
\frac{(3h-2)|A|}{5h-1}-(5h-1)\geq 
\frac{(3h-2)\cdot 6h(h+2)}{5h-1}-5h+1\geq h-2
$$
for any $h\geq 1$. Hence we have $|A\splus{S}B|\geq|A|+|B|-|S|-2$ when $|S|>h$.

Suppose that $m\leq 2h+1$ and $m<n$.   
Then
\begin{align*}
(n-m)|\cA_1|-n\geq &\frac{n-m}{m}\cdot|A|-n\geq \frac{|A|}{m}-m-1\\
\geq&\frac{|A|}{2h+1}-(2h+1)-1
\geq\frac{6h(h+2)}{2h+1}-2h-2\geq h-2. 
\end{align*}
From (\ref{ASBpam2h1nm}), we also can get $|A\splus{S}B|\geq|A|+|B|-|S|-2$.

Suppose that $m=n\leq 2h+1$.
Since $|\bar{A}+\bar{B}|\geq m+1$, there exist $1\leq \gamma,\eta\leq h$ such that $\bar{a}_\gamma+\bar{b}_\eta\not\in\{\bar{a}_1+\bar{b}_1,\ldots,\bar{a}_1+\bar{b}_m\}$. 
Let $\cT=(b_\eta-a_\gamma)+s_i+\cS_i$ if $\bar{a}_\gamma-\bar{b}_\eta=\bar{s}_i$, and let $\cT=\emptyset$ if $\bar{a}_\gamma-\bar{b}_\eta\not\in\bar{S}$.
Clearly
$$
|\cT|\leq\max_{1\leq i\leq h}|\cS_i|\leq |S|-h+1.
$$
Then
\begin{align*}
|A\splus{S}B|\geq&\sum_{k=1}^r|\cA_1\splus{\cS_{\lambda_k}^*}\cB_{\mu_k}|+
\sum_{\substack{1\leq\nu\leq m\\ \nu\not\in\{\mu_1,\ldots,\mu_r\}}}|\cA_1+\cB_\nu|+|\cA_\gamma\splus{\cT}\cB_\eta|\\
\geq&m|\cA_1|+|B|-|S|-h-n+(|\cA_\gamma|-|\cT|)\\
\geq&|A|+|B|-|S|-2+(|\cA_1|-|S|-2h).
\end{align*}
Note that $$|A|\geq 6|S|(h+1)-5=3|S|(2h+1)+3|S|-5\geq(|S|+2h)\cdot(2h+1).$$ We obtain that
$$
|\cA_1|\geq\frac{|A|}{m}\geq\frac{|A|}{2h+1}\geq |S|+2h,
$$
i.e., $|A\splus{S}B|\geq |A|+|B|-|S|-2$.

\medskip\noindent (ii) $|S|=h$.\medskip

Now $|\cS_i|=1$ for each $1\leq i\leq h$. We shall use another way to give the lower bound of $|A\splus{S}B|$
Since $h\geq 2$ and $|B|\geq 6h(h-1)$,
in view of (\ref{A1Bm3hSk}),  we have
\begin{align}\label{mnB5S}
|A\splus{S}B|\geq&n(|\cA_1|-1)+|B|+(m-3h)-h-|S|\notag\\
\geq&n+m+|B|-5h\geq m+n+h.
\end{align}
So we may assume that $m+n-1\leq p$. According to Lemma \ref{ABmn1G}, 
assume that $\bar{a}_1+\bar{b}_1,\ldots,\bar{a}_1+\bar{b}_n,
\bar{a}_2+\bar{b}_{\upsilon_2},\ldots,\bar{a}_m+\bar{b}_{\upsilon_m}$ are distinct elements of $\bar{A}+\bar{B}$.
For $2\leq j\leq m$, let
$$
\cT_j=\begin{cases}(b_{\upsilon_j}-a_j)+s_i+\cS_i,\qquad&\text{if }
 \bar{a}_j-\bar{b}_{\upsilon_j}=\bar{s}_i\text{ for some }1\leq i\leq h,\\
\emptyset,\qquad&\text{if }\bar{a}_j-\bar{b}_{\upsilon_j}\not\in\bar{S}.
\end{cases}
$$
Then
\begin{align}\label{ASBS1m1}
|A\splus{S}B|\geq&\sum_{k=1}^r|\cA_1\splus{\cS_{\lambda_k}^*}\cB_{\mu_k}|+
\sum_{\substack{1\leq\nu\leq m\\ \nu\not\in\{\mu_1,\ldots,\mu_r\}}}|\cA_1+\cB_\nu|+\sum_{j=2}^{m}|\cA_{j}\splus{\cT_{j}}\cB_{\upsilon_j}|\notag\\
\geq&\sum_{k=1}^r(|\cA_1|+|\cB_{\mu_k}|-3)+
\sum_{\substack{1\leq\nu\leq m\\ \nu\not\in\{\mu_1,\ldots,\mu_r\}}}(|\cA_1|+|\cB_\nu|-1)+\sum_{j=2}^{m}(|\cA_{j}|-1)\notag\\
\geq&|A|+|B|+(n-1)|\cA_1|-2h-n-(m-1)\notag\\
\geq&|A|+|B|-|S|-2+\big((n-1)|\cA_1|-2n-h+3\big).
\end{align}
Recalling that $2\leq n\leq 5h-1$, we have
\begin{align*}
(n-1)|\cA_1|-2n\geq&
(n-1)\cdot\bigg\lceil\frac{|A|}{n}\bigg\rceil-2n\\
\geq&\min\bigg\{\frac{(2-1)|A|}{2}-2\cdot 2,\ (5h-2)\cdot\bigg\lceil\frac{|A|}{5h-1}\bigg\rceil-2(5h-1)\bigg\},
\end{align*}
where $\lceil x\rceil$ denotes the least integer not less than $x$.
It is easy to verify that
$$
\frac{|A|}{2}-4\geq (3h^2-3)-4\geq h-3
$$
for $h\geq 2$,
and
$$
\frac{(5h-2)|A|}{5h-1}-2(5h-1)\geq
\frac{(5h-2)\cdot (6h^2-5)}{5h-1}-2(5h-1)\geq h-3
$$
whenever $h\geq 2$. When $h=2$, we also have
$$
(5h-2)\cdot\bigg\lceil\frac{|A|}{5h-1}\bigg\rceil-2(5h-1)
=8\cdot\bigg\lceil\frac{19}{9}\bigg\rceil-18=6>2-3.
$$
So by (\ref{ASBS1m1}), we get the desired result.

\medskip\noindent(II) $|\cA_1|=1$.\medskip

Suppose that $|\cB_1|\geq 2$. Since $|\cA_1|=\cdots|\cA_m|=1$, we have
$$
m=|A|\geq 6|S|^2-5\geq 4h.
$$
Assume that $1\leq \hat{\mu}_1,\ldots,\hat{\mu}_{\hat{r}}$ are all integers such that $\bar{a}_{\hat{\mu}_k}-\bar{b}_1=\bar{s}_{\hat{\lambda}_k}\in\bar{S}$ for $1\leq k\leq\hat{r}$.
And assume that $\bar{a}_{\hat{\gamma}_1}+\bar{b}_{\hat{\eta}_1},\ldots,\bar{a}_{\hat{\gamma}_{n-2h-1}}+\bar{b}_{\hat{\eta}_{n-2h-1}}$ are distinct elements of $(\bar{A}\splus{\bar{S}}\bar{B})\setminus\{\bar{a}_1+\bar{b}_1,\ldots,\bar{a}_m+\bar{b}_1\}$
with $\hat{\eta}_{j}\leq j+2h+1$ for $1\leq j\leq n-2h-1$.
Letting $\cS_{\hat{\lambda}_k}^*=({b}_1-{a}_{\hat{\mu}_k})+s_{\hat{\lambda}_k}+\cS_{\hat{\lambda}_k}$, we have
\begin{align}\label{ASBA11B}
|A\splus{S}B|\geq&\sum_{k=1}^{\hat{r}}|\cA_{\hat{\mu}_k}\splus{\cS_{\hat{\lambda}_k}^*}\cB_1|+
\sum_{\substack{1\leq\hat{\nu}\leq m\\ \hat{\nu}\not\in\{\hat{\mu}_1,\ldots,\hat{\mu}_{\hat{r}}\}}}|\cA_{\hat{\nu}}+\cB_1|+\sum_{j=1}^{n-2h-1}|\cA_{\hat{\gamma}_j}+\cB_{\hat{\eta}_j}|\notag\\
\geq&\sum_{k=1}^{\hat{r}}(|\cB_1|-|\cS_{\hat{\lambda}_k}^*|)+
\sum_{\substack{1\leq\hat{\nu}\leq m\\ \hat{\nu}\not\in\{\hat{\mu}_1,\ldots,\hat{\mu}_{\hat{r}}\}}}|\cB_1|+\sum_{j=1}^{n-2h-1}|\cB_{\hat{\eta}_j}|\notag\\
\geq&(2h+1)|\cB_1|+\sum_{j=1}^{n-2h-1}|\cB_{\hat{\eta}_j}|-|S|\geq |B|+(m-2h-1)|\cB_1|-|S|\notag\\
\geq&|B|+|A|+2(m-2h-1)-|S|-m\geq |A|+|B|-|S|-2.
\end{align}

Finally, assume that $|\cB_1|=1$. Then  by the induction hypothesis, 
$$
|A\splus{S}B|\geq|\bar{A}\splus{\bar{S}}\bar{B}|\geq|\bar{A}|+|\bar{B}|-|\bar{S}|-2\geq|A|+|B|-|S|-2.
$$
\end{proof}

\section{Proof of Theorem \ref{MainT2}}
\setcounter{lemma}{0}\setcounter{theorem}{0}\setcounter{corollary}{0}
\setcounter{equation}{0}

By Proposition \ref{T2pa}, there is nothing to do if $G=\Z_{p^\alpha}$. Suppose that $G$ is not a cyclic group of prime power order. 
The case $|S|=1$ easily follows from (\ref{B-W}). Note that
$$
|A\splus{S}B|\geq |A|-|S|\geq 9|S|^2-6|S|-3=3(3|S|+1)(|S|-1)\geq |S|
$$
whenever $|S|\geq 2$.
We always assume that $2\leq |S|<p(G)$. 

We use an induction on $|G|$. Assume that Theorem \ref{MainT2} holds for any abelian group whose order is less than $|G|$.
For a subgroup $H$ of $G$, define
$$
\fX_{A,H}=\{a+H\in G/H:\ A\cap(a+H)\neq\emptyset\}.
$$
We claim that $$|\fX_{A,H}|\geq\sqrt{|A|}$$ for some subgroup $H\subseteq G$ of prime order.
Since $G$ is not a cyclic group of prime power order, we may write $G=K_1\oplus K_2$ where $|K_1|,|K_2|>1$. Assume that $|\fX_{A,K_1}|<\sqrt{|A|}$. By the pigeonhole principle, there exists a coset $a+K_1$ such that 
$A\cap(a+K_1)>\sqrt{A}$. Assume that $a+K_1=\{b_1,\ldots,b_k\}$. It is easy to see that $b_1+K_2,\ldots,b_k+K_2$ are distinct cosets of $K_2$. We get $|\fX_{A,K_2}|>\sqrt{|A|}$.
So $\max\{|\fX_{A,K_1}|,|\fX_{A,K_2}|\}\geq\sqrt{|A|}$. Assume that $|\fX_{A,K_1}|\geq \sqrt{|A|}$ and let $H$ be a subgroup of $K_1$ with $|H|$ is prime. Clearly we also have
$|\fX_{A,H}|\geq\sqrt{|A|}$. Since
$$\min\{|A|,|B|\}\geq 9|S|^2-5|S|-3,$$
we may assume that.
$$
\max\{|\fX_{A,H}|,|\fX_{B,H}|\}\geq\sqrt{9|S|^2-5|S|-3}.
$$

Assume that $$\bar{A}=\{\bar{a}_1,\ldots,\bar{a}_m\},\qquad \bar{B}=\{\bar{b}_1,\ldots,\bar{b}_n\},\qquad\bar{S}=\{\bar{s}_1,\ldots,\bar{s}_h\},$$
where $\bar{a}=a+H$ and $\bar{A}=\fX_{A,H}$.
Further, without loss of generality, assume that $n\geq m$. According to our choice of $H$, we know that $n\geq\sqrt{9|S|^2-5|S|-3}$.
Write
$$
A=\bigcup_{i=1}^m(a_i+\cA_i),\qquad B=\bigcup_{i=1}^n(b_i+\cB_i),\qquad
S=\bigcup_{i=1}^h(s_i+\cS_i),
$$
where
$$
|\cA_1|\geq|\cA_2|\geq\cdots\geq|\cA_m|,\qquad
|\cB_1|\geq|\cB_2|\geq\cdots\geq|\cB_n|.
$$
If $m=n=1$, then for some $1\leq i\leq h$, $$|A\splus{S} B|\geq |\cA_1|+|\cB_1|-|\cS_{i}|-2\geq|A|+|B|-|S|-2.$$ So below assume that either $m\geq 2$ or $m<n$.

\medskip\noindent(I) $|\cA_1|\geq 2$.\medskip

We have
\begin{align}\label{}
|A\splus{S}B|\geq&\sum_{k=1}^r|\cA_1\splus{\cS_{\lambda_k}^*}\cB_{\mu_k}|+
\sum_{\substack{1\leq\nu\leq m\\ \nu\not\in\{\mu_1,\ldots,\mu_r\}}}|\cA_1+\cB_\nu|+\sum_{j=1}^{\tau}|\cA_{\gamma_j}+\cB_{\eta_j}|\notag\\
\geq&\sum_{k=1}^r(|\cA_1|+|\cB_{\mu_k}|-|\cS_{\lambda_k}|-2)+
\sum_{\substack{1\leq\nu\leq m\\ \nu\not\in\{\mu_1,\ldots,\mu_r\}}}(|\cA_1|+|\cB_\nu|-1)+\sum_{j=1}^{\tau}|\cA_{\gamma_j}|\notag\\
\geq&n|\cA_1|+|B|+\sum_{j=1}^{\tau}|\cA_{\gamma_j}|-\sum_{k=1}^r(|\cS_{k}|+1)-n,
\end{align}
where we have assumed that those
$$
|\cA_1\splus{\cS_{\lambda_k}^*}\cB_{\mu_k}|,\ |\cA_1+\cB_\nu|,\
|\cA_{\gamma_j}+\cB_{\eta_j}|<p(G).
$$
Suppose that $m\geq 3h+1$. Then
\begin{equation}\label{ASBn3hA1}
|A\splus{S}B|\geq |A|+|B|-|S|-2+\big((n-3h)|\cA_1|-n-h+2\big),
\end{equation}
since $\tau\geq m-3h$ now. 
And if $m\leq 3h$, we also have
\begin{align}\label{ASBnmA1}
|A\splus{S}B|\geq&|A|+|B|-|S|-2+\big((n-m)|\cA_1|-n-h+2\big).
\end{align}

If $n\geq 7h-2$, then 
$$
|A\splus{S}B|\geq |A|+|B|+2(n-3h)-|S|-h-n\geq |A|+|B|-|S|-2.
$$
Below assume that $n\leq 7h-3$. 

\medskip\noindent(i) $|S|\geq h+1$. \medskip

By (\ref{ASBn3hA1}),
we have
\begin{align*}
|A\splus{S}B|\geq |A|+|B|-|S|-2+\bigg(\frac{n-3h}{n}\cdot|A|-n-h+2\bigg).
\end{align*}
Clearly the function $(x-3h)|A|/x+x$ is increasing on $(0,\sqrt{3h|A|}]$ and is 
decreasing on $[\sqrt{3h|A|},+\infty)$. 
Note that $$|A|\geq 9|S|^2-5|S|-3\geq 9h^2+13h+1.$$
It is easy to check that
$$
\sqrt{3h|A|}\geq\sqrt{3h(9h^2+13h+1)}\geq\max\{\sqrt{9h^2+13h+1},\ 7h-3\}.
$$
If $h\geq 2$, then $\sqrt{9h^2+13h+1}\leq 7h-3$. So
\begin{align*}
\frac{n-3h}{n}\cdot|A|-n\geq&(\sqrt{9h^2+13h+1}-3h)\cdot\frac{|A|}{\sqrt{9h^2+13h+1}}-\sqrt{9h^2+13h+1}\\
\geq&(\sqrt{9h^2+13h+1}-3h)\cdot\sqrt{9h^2+13h+1}-\sqrt{9h^2+13h+1}\\
=&h-2+\big(9h^2+12h+3-(3h+1)\sqrt{9h^2+13h+1}\big)\geq h-2,
\end{align*}
where the last step follows from
$$\frac{9h^2+12h+3}{3h+1}=3h+3>\sqrt{9h^2+13h+1}.
$$
And if $h=1$, then
\begin{align*}
\frac{n-3h}{n}\cdot|A|-n\geq(7h-6)\cdot\frac{|A|}{7h-3}-(7h-3)\geq1\cdot\frac{23}{4}-4>h-2.
\end{align*}
Thus we get (\ref{ABS1S2}).

\medskip\noindent(ii) $|S|=h$. \medskip

By (\ref{mnB5S}), we may assume that $m+n-1\leq p(G)$.
In view of (\ref{ASBS1m1}), we get
\begin{align*}
|A\splus{S}B|
\geq&|A|+|B|+(n-1)|\cA_1|-2h-n-(m-1)\\
\geq&|A|+|B|-|S|-2+\bigg((n-1)\cdot\bigg\lceil\frac{|A|}{n}\bigg\rceil-n-h+3\bigg).
\end{align*}
For $2\leq n\leq 7h-3$, we have
$$
(n-1)\cdot\bigg\lceil\frac{|A|}{n}\bigg\rceil-n\geq\min\bigg\{\bigg\lceil\frac{|A|}{2}\bigg\rceil-2,\ (7h-4)\cdot\bigg\lceil\frac{|A|}{7h-3}\bigg\rceil-(7h-3)\bigg\}.
$$
Since $|A|\geq 9h^2-5h-3$, it is not difficult to verify that
$|A|/2-2\geq h-3$
for $h\geq 2$, and
$$
\frac{(7h-4)|A|}{7h-3}-(7h-3)\geq h-3
$$
for $h\geq 3$. And if $h=2$, then 
$$
(7h-4)\cdot\bigg\lceil\frac{|A|}{7h-3}\bigg\rceil-(7h-3)\geq 10\cdot\bigg\lceil\frac{23}{11}\bigg\rceil-11=9\geq h-3.
$$
So we have $|A\splus{S}B|\geq|A|+|B|-|S|-2$.

\medskip\noindent(II) $|\cA_1|=1$.\medskip

If $|\cA_1|=|\cB_1|=1$, then our assertion immediately follows from the induction hypothesis on $G/H$.
Suppose that $|\cA_1|=1$ and $|\cB_1|\geq 2$.
Then 
$$
m=|A|\geq 9|S|^2-5|S|-3\geq 6h-2.
$$
Assume that $\bar{a}_{\hat{\mu}_k}-\bar{b}_1=\bar{s}_{\hat{\lambda}_k}\in\bar{S}$ for $1\leq k\leq\hat{r}$, and
$\bar{a}_{\hat{\gamma}_1}+\bar{b}_{\hat{\eta}_1},\ldots,\bar{a}_{\hat{\gamma}_{n-3h}}+\bar{b}_{\hat{\eta}_{n-3h}}$ are distinct elements of $(\bar{A}\splus{\bar{S}}\bar{B})\setminus\{\bar{A}+\bar{b}_1\}$
with $\hat{\eta}_{j}\leq j+3h$ for $1\leq j\leq n-3h$.
Let $\cS_{\hat{\lambda}_k}^*=({b}_1-{a}_{\hat{\mu}_k})+s_{\hat{\lambda}_k}+\cS_{\hat{\lambda}_k}$. We obtain that
\begin{align}\label{}
|A\splus{S}B|\geq&\sum_{k=1}^{\hat{r}}|\cA_{\hat{\mu}_k}\splus{\cS_{\hat{\lambda}_k}^*}\cB_1|+
\sum_{\substack{1\leq\hat{\nu}\leq m\\ \hat{\nu}\not\in\{\hat{\mu}_1,\ldots,\hat{\mu}_{\hat{r}}\}}}|\cA_{\hat{\nu}}+\cB_1|+\sum_{j=1}^{n-3h}|\cA_{\hat{\gamma}_j}+\cB_{\hat{\eta}_j}|\notag\\
\geq&\sum_{k=1}^{\hat{r}}(|\cB_1|-|\cS_{\hat{\lambda}_k}^*|)+
\sum_{\substack{1\leq\hat{\nu}\leq m\\ \hat{\nu}\not\in\{\hat{\mu}_1,\ldots,\hat{\mu}_{\hat{r}}\}}}|\cB_1|+\sum_{j=1}^{n-3h}|\cB_{\hat{\eta}_j}|\notag\\
\geq&3h|\cB_1|+\sum_{j=1}^{n-3h}|\cB_{\hat{\eta}_j}|-|S|\geq |B|+(m-3h)|\cB_1|-|S|\notag\\
\geq&|B|+|A|+2(m-3h)-|S|-m\geq |A|+|B|-|S|-2.
\end{align}

\end{document}